\theoremstyle{plain}
\newtheorem{theorem}{Theorem}[section]
\newtheorem{lemma}[theorem]{Lemma}
\newtheorem{proposition}[theorem]{Proposition}
\theoremstyle{definition} 
\newtheorem{remark}[theorem]{Remark}
\newtheorem{definition}[theorem]{Definition}
\newcommand{\xn}{X_{\rm \scriptscriptstyle N}}
\begin{document}

\author{ Paolo Luzzini \thanks{Dipartimento di Matematica `Tullio Levi-Civita', Universit\`a degli Studi di Padova, Via Trieste 63, Padova 35121, Italy.} , Michele Zaccaron \thanks{Dipartimento di Matematica `Tullio Levi-Civita', Universit\`a degli Studi di Padova, Via Trieste 63, Padova 35121, Italy.}}
\title{A few results on permittivity variations   in electromagnetic cavities}
\maketitle

\abstract{We study the eigenvalues of  time-harmonic Maxwell's equations in a cavity upon 
changes in the electric permittivity  $\varepsilon$ of the medium. We prove that all the eigenvalues, both simple and multiple, are locally Lipschitz continuous with respect to $\varepsilon$.  Next, we show that simple eigenvalues and the symmetric functions of multiple eigenvalues depend real analytically upon $\varepsilon$ and we provide an explicit formula for their 
derivative in $\varepsilon$.  As an application of these results, we show that for a generic permittivity  all the Maxwell eigenvalues are simple.}

\vspace{.4cm}
\noindent \textbf{Key words:} Maxwell's equations, cavities, eigenvalue problem, permittivity variations, generic simplicity

\vspace{.4cm}
\noindent \textbf{AMS subject classifications:} 35Q61, 35Q60, 35P15

\section{Introduction}

In this paper we present some sensitivity results for the eigenvalues of the time-harmonic Maxwell's equations in a cavity  upon perturbations of the permittivity parameter.
The cavity is represented by a bounded  domain (i.e. a connected open subset) $\Omega $ of $\mathbb{R}^3$, and it is thought made of a  material which in general is inhomogeneous and anisotropic. Accordingly, the   permittivity $\varepsilon$ of the medium filling the domain $\Omega$ is described by a $(3 \times 3)$-matrix valued function in $\Omega$. In particular cases where the material presents additional properties and symmetries, the permittivity $\varepsilon$ takes simpler forms, for example becoming scalar in the case of a isotropic material, or even a scalar constant if the medium is both isotropic and homogeneous.

The eigenfrequency problem in a bounded domain $\Omega \subset \mathbb{R}^3$ consists in finding  two non-zero eigenfields $E,H$ and a non-zero eigenfrequency $\omega$ (also called angular frequency) such that the time-harmonic Maxwell's equations are satisfied in $\Omega$, namely
\begin{equation}\label{sys:max}
\mathrm{curl} E -{\rm i}\,\omega \mu H =0, \qquad \mathrm{curl} H +{\rm i}\,\omega \varepsilon E=0 \qquad \mbox{ in } \Omega.
\end{equation}
The vector field $E$ denotes the electric part of the electromagnetic field, while $H$ the magnetic one.
Furthermore, $\varepsilon$ and $\mu$ are $(3 \times 3)$-matrix valued maps which represent the electric permittivity  and the magnetic permeability tensors of the medium, respectively.
For the sake of simplicity, since we are interested in studying the behavior of problem \eqref{sys:max}  upon  variation of $\varepsilon$,  we normalize the permeability to have $\mu =I_3$, where $I_3$ denotes the $(3 \times 3)$-identity matrix.  By applying the $\mathrm{curl}$ operator to  the first equation of \eqref{sys:max} and setting $\omega^2 = \lambda$ we obtain 
\begin{equation*}
\operatorname{curl}\operatorname{curl} E = \lambda \varepsilon E\qquad \mbox{ in } \Omega.
\end{equation*}
Since the divergence of a $\mathrm{curl}$ is always zero then
\[
\mathrm{div}\, \varepsilon E = 0 \qquad \mbox{ in } \Omega.
\]
We couple the system with the boundary conditions of a perfect conductor, which for the electric field $E$ read as follows:
\begin{equation} \label{introluz:elec:bc}
\nu \times E = 0 \qquad \mbox{ on } \partial \Omega.
\end{equation}
Here $\nu$ denotes the outer unit normal vector to the boundary of $\Omega$ hence  condition \eqref{introluz:elec:bc} means that the electric field is orthogonal to the surface $\partial \Omega$.
Therefore, we   arrive at the following (electric) eigenvalue problem:
\begin{equation}\label{prob:eig}
\begin{cases}
\operatorname{curl}\operatorname{curl} E = \lambda \, \varepsilon E, \qquad &\mbox{ in } \Omega,\\
\mathrm{div}\, \varepsilon E = 0, & \mbox{ in } \Omega,\\
\nu \times E = 0, \qquad &\mbox{ on } \partial \Omega.
\end{cases}
\end{equation}
Note that it is also possible to obtain  the magnetic counterpart of problem \eqref{prob:eig}.
However, in the present work, we will devote our attention to the electric side.
The spectrum of problem  \eqref{prob:eig} is discrete (cf. \cite[Thm. 4.34]{kihe}) and  it consists of   a divergent sequence of  $\varepsilon$-dependent non-negative eigenvalues $\{\lambda_j[\varepsilon]\}_{j \in \mathbb{N}}$ of finite multiplicity that can be arranged in an increasing way
\[
0\leq \lambda_1[\varepsilon] \leq \lambda_2[\varepsilon] \leq \cdots \leq \lambda_n[\varepsilon] \leq \cdots \nearrow +\infty,
\]
where each eigenvalue is repeated in accordance with its multiplicity.
The kernel $K^\varepsilon(\Omega)$ of problem \eqref{prob:eig}, i.e. those eigenfields associated with $\lambda =0$, is composed of curl-free vector fields which are normal to the boundary and such that $\operatorname{div}\varepsilon E=0$ in $\Omega$, namely
\begin{equation} \label{zero:eigenspace:eps}
K^\varepsilon(\Omega) = \set{E \in L^2(\Omega)^3 : \operatorname{curl}E=0 \text{ in }\Omega,\, \operatorname{div}\varepsilon E =0\text{ in }\Omega,\, \nu \times E=0 \text{ on }\partial\Omega}.
\end{equation}
If $m \in \mathbb{N}$ is the number of connected components of the boundary of $\Omega$, then $\mathrm{dim}_\mathbb{R}K^\varepsilon(\Omega)=m-1$. In particular, if $\partial\Omega$ has only one connected component, $K^\varepsilon(\Omega)=\{0\}$. It is worth noting that the presence of the zero eigenvalue, and its multiplicity, depends only on the topology of $\Omega$. For a proof we refer to \cite[Prop. 6.1.1]{AsCiLa18} (see also \cite[Prop. 3.18]{abdg}, \cite[Ch. IX-A \S 1.3]{DaLi90}).

%

On the one hand, the aim of our work is to understand the dependence of all the eigenvalues $\lambda_j[\varepsilon]$, both simple 
and multiple, with respect to variations of the permittivity $\varepsilon$. On the other hand, as a  consequence of  our sensitivity analysis, we   prove that  all the eigenvalues are generically simple with respect to $\varepsilon$.

The mathematical study of Maxwell's equations and in particular of electromagnetic cavities has great  interest not only from the theoretic side but also for its real world  applications, for example in designing cavity resonators or shielding structures for electronic circuits. Here we mention, without the sake of completeness, the monographs 
\cite{Ce96, DaLi90, GiRa86, Mo03, Ne01,RoStYa12} and the classical papers \cite{Co90, Co91, CoDo99} for a complete introduction to this field and a detailed discussion 
of both theoretic and applied problems in the mathematical theory of electromagnetism. For more recent papers we refer to, e.g., \cite{AmBaWo00, BaPaSc16, CoCoMo19, LaSt19, Pa17, LaZa21}. Incidentally, we note that  in \cite{LaZa20}  Lamberti and the second named author  have considered the eigenvalues of problem \eqref{prob:eig} with fixed and constant permittivity $\varepsilon = I_3$ on a variable domain and proved  a real analytic dependence  upon variation of the shape of the domain.
To the best of the authors' knowledge, the dependence of the eigenvalues $\lambda_j[\varepsilon]$ upon perturbation of $\varepsilon$ has not yet been investigated.

As a first step,  we  consider the stability of the eigenvalues and we show that all the eigenvalues, both simple and multiple, are continuous with respect to $\varepsilon$ varying in $W^{1,\infty}$. Actually we are able to prove a stronger result, indeed we show that the eigenvalues are locally Lipschitz continuous 
in $\varepsilon$ (see Theorem \ref{thm:loclip}). 

Then, we pass to consider higher regularity properties. At this stage we face a first issue  related to  bifurcation phenomena of multiple eigenvalues which is  common to any eigenvalue problem depending on a parameter. 
In our case, if one has a multiple eigenvalue $\lambda= \lambda_{j}[\varepsilon]=\cdots = \lambda_{j+m-1}[\varepsilon]$
   and $\varepsilon$ is slightly perturbed, $\lambda$ could split into different eigenvalues of lower multiplicity and thus the corresponding branches can  present a corner at the splitting point, and then be not differentiable. A possible way to overturn this problem is to consider only perturbations  $\{\varepsilon_t\}_{t \in \mathbb{R}}$,
    with $\varepsilon_0=\varepsilon$, depending on a single scalar parameter $t \in \mathbb{R}$ and consider  the one-sided derivative of the multiple eigenvalue at $t=0$ (see, e.g., \cite{DjFaWe21,FaWe18} for  the one-sided shape derivative of a--possibly multiple--eigenvalue for two different problems). Note that this approach, being based on  the variational characterization of the eigenvalues,  has been effectively applied only to the first non-zero eigenvalue.
    
     Here we adopt a different point of view that allows us to deal with multiple eigenvalues and general (infinite dimensional) perturbations of the permittivity:  instead of considering a single eigenvalue we consider  the symmetric functions of multiple eigenvalues and we show that they depend real analytically on 
   $\varepsilon$. In addition we provide an explicit formula for the (Fr\'echet) derivative in $\varepsilon$ of the symmetric functions of the eigenvalues (see Theorem \ref{thm:diffeps}).  This suggests that the symmetric functions are  a natural quantity to consider when dealing with the regularity (and the optimization) of multiple eigenvalues. This approach was introduced by Lamberti and Lanza de Cristoforis \cite{LaLa04} and later adopted in other works (see, e.g., \cite{BuLa13, BuLa15, LaZa20, LaLuMu21, LaMuTa21}).  We also consider the case of perturbations depending on a single scalar parameter, like the ones we introduced above, and we prove a Rellich-Nagy-type theorem which describes the bifurcation phenomenon of multiple eigenvalues. More precisely, we show that all the eigenvalues splitting from a multiple eigenvalue of multiplicity $m$ can be described by $m$ real analytic functions of the scalar parameter  (see Theorem \ref{thm:RN}). 
 
As an application of the above described results, we show that all the non-zero eigenvalues of problem \eqref{prob:eig} are simple for a 
generic permittivity. That is, in few words, given any permittivity $\varepsilon$ it is always possible to find a perturbation 
$\tilde \varepsilon$ as close as desired to $\varepsilon$ such that the non-zero eigenvalues 
$\{\lambda_j[\tilde \varepsilon]\}_{j \in \mathbb{N}}$ are all simple (see Theorem \ref{thm:gensim}).

To a certain extent, our work is inspired by Lamberti \cite{La09} and Lamberti and Provenzano \cite{LaPr13}  where the authors investigate the behavior of the eigenvalues of the Laplacian and of a general elliptic operators upon perturbations of mass density.
Incidentally, we mention that this paper is a first step towards  understanding  the behavior of Maxwell eigenvalues upon permittivities variations. In particular the authors plan to investigate issues related to the optimization of Maxwell eigenvalues with respect to $\varepsilon$ in a future work. 

 After the present introduction the paper is organized as follows: Section \ref{sec:pre} is a section of preliminaries containing  notation, the functional setting  and basic results about  the eigenvalue problem. In Section \ref{sec:cont} we prove that all the eigenvalues are locally Lipschitz continuous in $\varepsilon$. In Section \ref{sec:an} we show that the symmetric functions of the eigenvalues depend analytically upon $\varepsilon$ and we obtain an explicit formula for the $\varepsilon$-derivative. Moreover, we prove a Rellich-Nagy-type result for permittivities depending on a single scalar parameter. Finally, in Section \ref{sec:gen} we show that all the non-zero eigenvalues are simple for a generic permittivity.
 
\section{Some preliminaries}\label{sec:pre}
If $\mathcal{X}$ is a Hilbert space of scalar functions, by $\mathcal{X}^3$ we denote the Hilbert space of vector-valued functions whose components belong to $\mathcal{X}$, endowed with the natural inner product

\begin{equation*}
\langle f,g \rangle_{\mathcal{X}^3} = \sum_{i=1}^3 \langle f_i,g_i\ \rangle_\mathcal{X}   
\end{equation*}
for all $f=(f_1,f_2,f_3)$, $ g=(g_1,g_2,g_3)
 \in \mathcal{X}^3$, 
where $\langle \cdot, \cdot \rangle_{\mathcal{X}}$ is the inner product of $\mathcal{X}$.
In this sense,  e.g.,   if $L^2(\Omega)$ is the standard Lebesgue space of square integrable real valued functions, then the space $L^2(\Omega)^3$ is endowed with the inner product
\[
\int_{\Omega} u \cdot v \,dx  = \int_{\Omega} (u_1v_1+u_2v_2+u_3v_3) \,dx \qquad \forall u,v \in L^2(\Omega)^3.
\]

Let $\Omega$ be a bounded domain of  $\mathbb{R}^3$.
We denote by $L^\infty(\Omega)^{3 \times 3}$ and  $W^{1,\infty}(\Omega)^{3 \times 3}$ the spaces of real matrix-valued functions $M=\left( M_{ij} \right)_{1 \leq i,j \leq 3}:\Omega \to   \mathbb{R}^{3 \times 3}$ 
whose components are in $L^\infty(\Omega)$ and $W^{1,\infty}(\Omega)$, respectively. We endow these spaces   with the norms
\begin{equation*}
\norm{M}_{L^\infty(\Omega)^{3 \times 3}} := \max_{1 \leq i,j \leq 3} \norm{M_{ij}}_{L^\infty(\Omega)}
\end{equation*}
and 
\begin{equation*}
\norm{M}_{W^{1,\infty}(\Omega)^{3 \times 3}} := \max_{1 \leq i,j \leq 3} \norm{M_{ij}}_{W^{1,\infty}(\Omega)}.
\end{equation*}
For the sake of simplicity, we will respectively  write $L^\infty(\Omega)$ and $W^{1,\infty}(\Omega)$ instead of $L^\infty(\Omega)^{3 \times 3}$ and $W^{1,\infty}(\Omega)^{3 \times 3}$, and the space we are referring to will be clear from the context.
Let $M \in L^\infty(\Omega)$. One has the following trivial inequalities that we will exploit in the paper: 
\begin{equation*}
\abs{M \xi \cdot \xi} \leq 3 \norm{M}_{L^\infty(\Omega)} \abs{\xi}^2, \qquad \abs{M \xi} \leq 3 \norm{M}_{L^\infty(\Omega)} |\xi|
\end{equation*}
for all $\xi \in \mathbb{R}^3$ and a.e. in $\Omega$.  

In order to consider our eigenvalue problem, we first need to specify where we take the permittivities $\varepsilon$.
From now on we will assume that:
\begin{equation}\label{Omega_def}
\text{$\Omega$ is a bounded domain of $\mathbb{R}^3$ of class $C^{1,1}$.}
\end{equation}
The admissible set where we take the permittivities is the following
\begin{align*}
\mathcal{E}:= \Big\{\varepsilon \in W^{1,\infty} \left(\Omega\right) \cap  \,\, &\mathrm{Sym}_3 (\Omega) : \\
&\exists \, c>0 \text{ s.t. }  \varepsilon(x) \, \xi \cdot \xi \geq c \, \abs{\xi}^2 \text{ for a.a. } x \in  \Omega, \text{ for all }\xi \in \mathbb{R}^3\Big\},
\end{align*}
where $\mathrm{Sym}_3 (\Omega)$ denotes the set of $(3 \times 3)$-symmetric matrix valued functions in $\Omega$.
Given $\varepsilon \in \mathcal{E}$, we denote by $c_\varepsilon >0$  the greatest positive constant  that guarantees the coercivity condition in the above definition, that is
\begin{equation}\label{def:cc}
c_\varepsilon := \max\Big\{c>0 :   \varepsilon(x) \, \xi \cdot \xi \geq c \, \abs{\xi}^2 \text{ for a.a. } x \in  \Omega, \text{ for all }\xi \in \mathbb{R}^3\Big\}.
\end{equation}
The set $\mathcal{E}$ is  open in  $W^{1,\infty}(\Omega)\cap  \,\, \mathrm{Sym}_3 (\Omega)$. This is implied by the continuity of the map 
\begin{equation*}
\left( \mathcal{E}, \|\cdot\|_{L^{\infty}(\Omega)} \right) \to \mathbb{R}_+, \qquad \varepsilon \mapsto c_\varepsilon.
\end{equation*}
Indeed let $\varepsilon_1, \varepsilon_2 \in \mathcal{E}$. Since $\abs{(\varepsilon_2 - \varepsilon_1) \, \xi \cdot \xi} \leq 3 \norm{\varepsilon_2 - \varepsilon_1}_{L^\infty(\Omega)} \abs{\xi}^2$ a.e. in $\Omega$, then
\begin{equation*} 
\varepsilon_2 \, \xi \cdot \xi = \varepsilon_1 \, \xi \cdot \xi + (\varepsilon_2 - \varepsilon_1) \, \xi \cdot \xi \geq \left(c_{\varepsilon_1} - 3 \norm{\varepsilon_2 - \varepsilon_1}_{L^\infty(\Omega)} \right) \abs{\xi}^2.
\end{equation*}
Hence $c_{\varepsilon_2} \geq c_{\varepsilon_1} - 3 \|\varepsilon_2 - \varepsilon_1\|_{L^\infty(\Omega)}$. 
Eventually exchanging the role of $\varepsilon_1$ and   $\varepsilon_2$, we have that 
\begin{equation}\label{contcoerc}
\abs{c_{\varepsilon_2} - c_{\varepsilon_1}} \leq 3 \norm{\varepsilon_2 - \varepsilon_1}_{L^\infty(\Omega)}.
\end{equation}

Let $\varepsilon \in \mathcal{E}$. We denote by 
$L^2_\varepsilon(\Omega)$ 
 the space $L^2(\Omega)^3$ endowed with the inner product
\begin{equation} \label{eps:inner:product}
\langle u, v \rangle_\varepsilon = J_\varepsilon[u][v]:= \int_\Omega \varepsilon u \cdot v \, dx \qquad \forall u,v \in L^2(\Omega)^3.
\end{equation}
Note that the above inner product induces a norm equivalent to the standard $L^2$-norm since
\begin{equation*}
c_{\varepsilon} \int_\Omega \abs{u}^2 dx  \leq \int_\Omega \varepsilon u  \cdot u \, dx  \leq  3 \norm{\varepsilon}_{L^\infty(\Omega)} \int_\Omega \abs{u}^2 dx \qquad \forall u \in  L^2(\Omega)^3. 
\end{equation*}

Next we introduce the natural functional setting and tools in order to deal with problem \eqref{prob:eig}.  
By $H(\mathrm{curl}, \Omega)$  we denote the  space of vector fields  $ u \in L^2(\Omega)^3$ with distributional $\mathrm{curl}$ in $L^2(\Omega)^3$, i.e. those square integrable vector fields for which  there exists a function $\mathrm{curl}\, u \in  L^2(\Omega)^3$ such that 
\begin{equation} \label{distrib:eps:div:dfn}
\int_\Omega u \cdot \operatorname{curl}\varphi \, dx = \int_\Omega \operatorname{curl} u \cdot \varphi \, dx 
\qquad \forall \varphi \in C^\infty_c(\Omega)^3.
\end{equation}
We endow this space with the  inner product
\begin{equation*}
\langle u,v\rangle_{H(\operatorname{curl}, \Omega)} := \int_\Omega \varepsilon u \cdot v \, dx + \int_\Omega \operatorname{curl}u \cdot \operatorname{curl}v \, dx \qquad \forall u,v \in H(\operatorname{curl},\Omega),
\end{equation*}
which makes it a Hilbert space.
By $H_0(\mathrm{curl}, \Omega)$ we denote  the closure of $C^\infty_c(\Omega)^3$ in $H(\mathrm{curl}, \Omega)$.
If a vector field $u$ is regular enough to be traced on the boundary, say it is smooth up to the boundary, then the \emph{tangential trace} of $u$ coincides exactly with the cross product between its restriction to $\partial \Omega$ and the outer unit normal, i.e. $\nu \times u\rvert_{\partial \Omega}$. From now on we use the same notation also to denote the tangential trace of a vector field $u \in H(\operatorname{curl}, \Omega)$, which in general is just an element of the dual space of $H^{1/2}(\partial \Omega)^3$ (see \cite[Thm. 2.11]{GiRa86}).
We will also often omit the boundary restriction subscript. 
It turns out that $H_0(\operatorname{curl},\Omega)$ is exactly the space of those vector fields whose tangential trace vanish on $\partial\Omega$ (cf. \cite[Thm. 2.12]{GiRa86}), i.e.
\begin{equation*}
H_0(\operatorname{curl},\Omega) = \set{u \in H(\operatorname{curl},\Omega) : \nu \times u\rvert_{\partial \Omega}=0},
\end{equation*}
hence it naturally encodes the electric boundary condition \eqref{introluz:elec:bc}.
For more details we refer to  \cite[Ch. 2]{GiRa86} or \cite[Ch. IX-A \S 1.2]{DaLi90}.

Similarly, we introduce the space $H(\operatorname{div} \varepsilon, \Omega)$ of vector fields $u\in L^2(\Omega)^3$ such that the vector field $\varepsilon u$ has distributional divergence in $L^2(\Omega)$,
namely there exists a function $\operatorname{div}(\varepsilon u) \in L^2(\Omega)$ such that  
\begin{equation*}  
\int_\Omega \varepsilon u \cdot \nabla \varphi \, dx = - \int_\Omega \operatorname{div}(\varepsilon u) \, \varphi \, dx \qquad \forall\varphi \in C^\infty_c(\Omega).
\end{equation*}
We endow $H(\operatorname{div}\varepsilon, \Omega)$ with the  inner product
\begin{equation*}
\langle u,v\rangle_{H(\operatorname{div} \varepsilon, \Omega)} := \int_\Omega \varepsilon u \cdot v \, dx + \int_\Omega \operatorname{div}(\varepsilon u)  \operatorname{div}(\varepsilon v) \, dx \qquad  \forall u,v \in H(\operatorname{div}\varepsilon,\Omega),
\end{equation*}
which makes it a Hilbert space. 
Moreover,  we consider the space
\[
\xn^\varepsilon(\Omega) := H_0(\mathrm{curl}, \Omega) \cap H(\mathrm{div}\,\varepsilon, \Omega)
\]
equipped with inner product
\begin{equation*}
\langle u,v\rangle_{\xn^\varepsilon(\Omega)} := \int_\Omega \varepsilon u \cdot v \, dx  + \int_\Omega \operatorname{curl}u \cdot \operatorname{curl}v
 \, dx + \int_\Omega \operatorname{div} (\varepsilon u) \, \operatorname{div}(\varepsilon v) \, dx
\end{equation*}
for all $u,v \in \xn^\varepsilon(\Omega)$.
Finally, we set 
\begin{equation*}
\begin{split}
\xn^\varepsilon(\mathrm{div}\,\varepsilon 0,\Omega) &:= \set{u \in \xn^\varepsilon(\Omega) : \mathrm{div} \, (\varepsilon u) = 0} \\
&= \set{u \in L^2(\Omega)^3 : \operatorname{curl}u \in L^2(\Omega)^3, \operatorname{div}(\varepsilon u)=0, \nu \times u\rvert_{\partial \Omega}=0}.
\end{split}
\end{equation*}
If $\varepsilon \in \mathcal{E}$ and the assumption \eqref{Omega_def} holds, i.e. if $\Omega$ is a bounded domain of $\mathbb{R}^3$ of class $C^{1,1}$, the space $\xn^\varepsilon(\Omega)$ is continuously embedded into $H^1(\Omega)^3$. This is implied by the so-called Gaffney (or Gaffney-Friedrichs) inequality,  which states that there exists a constant $C_\varepsilon >0$ such that
\begin{equation}\label{ineq:GF}
\norm{u}_{H^1(\Omega)^3}^2 \leq C_\varepsilon \left(  \langle \varepsilon u, u\rangle_{L^2(\Omega)^3}+ \norm{\operatorname{curl}u}^2_{L^2(\Omega)^3} + \|\operatorname{div}\varepsilon u\|^2_{L^2(\Omega)} \right) = C_\varepsilon \|u\|_{\xn^\varepsilon (\Omega)}
\end{equation}
for all $u \in \xn^\varepsilon (\Omega)$. 
We refer to Prokhorov and Filonov \cite[Thm. 1.1]{PrFi15} for a proof of the above inequality. Their result includes more general  permittivities and  domains, such as convex domains or in general Lipschitz domains satisfying the exterior ball condition. Another  proof can be found in Alberti and Capdeboscq \cite{AlCa14}. Other classical references for the Gaffney inequality are   Saranen \cite{Sa83} and Mitrea \cite{Mit01}. More recently, Creo and Lancia \cite{CrLa20} generalized the Gaffney inequality to more irregular domains in dimension $2$ and $3$.
Incidentally, we point out that one of the main reasons for the regularity assumption   \eqref{Omega_def} we require on $\Omega$   is exactly the validity of \eqref{ineq:GF}.
Note that if we lower the regularity assumptions, for example requiring $\Omega$ just of Lipschitz class, there is no guarantee to obtain the same perturbation results. The authors plan to address this issue in future works.

We recall here a known formula for the divergence of the matrix-vector product $\varepsilon v$ with $\varepsilon \in \mathcal{E}$ and $v \in H^1(\Omega)^3$ that we will exploit extensively throughout the paper:
\begin{equation} \label{div:matrixxvector}
\operatorname{div}(\varepsilon v) = \operatorname{tr}(\varepsilon Dv) + \operatorname{div}\varepsilon \cdot v 
\qquad \mbox{ a.e. in } \Omega.
\end{equation}
where $\operatorname{tr}(\cdot)$ denotes the trace operator and  $\operatorname{div}\varepsilon$ is the vector field defined by
\begin{equation*}
\operatorname{div}\varepsilon := \left( \operatorname{div}\varepsilon^{(1)}, \operatorname{div}\varepsilon^{(2)}, \operatorname{div}\varepsilon^{(3)} \right).
\end{equation*}
  with $\varepsilon^{(k)}$ denoting the $k$-th column of  $\varepsilon = \left(\varepsilon^{(1)} | \, \varepsilon^{(2)} | \, \varepsilon^{(3)} \right)$.

Recall the electric eigenvalue problem
\begin{equation}\label{prob:eigen1}
\begin{cases}
\operatorname{curl}\operatorname{curl} u = \lambda \, \varepsilon \, u \qquad &\mbox{ in } \Omega,\\
\operatorname{div}\varepsilon u = 0 \qquad &\mbox{ in } \Omega,\\
\nu \times u = 0 \qquad &\mbox{ on } \partial \Omega.
\end{cases}
\end{equation}
By classical integration by parts, one has that
\begin{equation*}
\int_\Omega \operatorname{curl}F \cdot G \, dx = \int_\Omega F \cdot \operatorname{curl}G \, dx + \int_{\partial \Omega} (G \times \nu) \cdot F \, d\sigma
\end{equation*}
for all sufficiently regular vector fields $F,G$ (see, e.g. \cite[Thm. A.13]{kihe}). Then is readily seen  that the weak formulation of problem \eqref{prob:eigen1} is
\begin{equation}\label{prob:eigen1weak}
\int_{\Omega} \mathrm{curl}\,u \cdot \mathrm{curl}\, v \,dx  =\lambda\int_{\Omega}\varepsilon u\cdot v \,dx
  \qquad \forall v \in \xn^\varepsilon(\mathrm{div}\,\varepsilon 0,\Omega),
\end{equation}
in the unknowns $\lambda \in \mathbb{R}$ (the eigenvalues) and $u \in \xn^\varepsilon(\mathrm{div}\,\varepsilon 0,\Omega)$ (the eigenvectors).
The eigenvalues of problem \eqref{prob:eigen1weak} are non-negative, as one can easily see by testing the eigenfunction $u$ against itself.

For our purposes it will be convenient  to work in the space $\xn^\varepsilon(\Omega)$ rather than $\xn^\varepsilon(\mathrm{div}\,\varepsilon 0,\Omega)$. Hence, following Costabel \cite{Co91} and Costabel and Dauge \cite{CoDo99}, we consider the following eigenvalue problem which presents  an additional penalty term: 
\begin{equation}\label{prob:eigen2weak}
\int_{\Omega} \mathrm{curl}\,u \cdot \mathrm{curl}\,v \,dx + \tau \int_{\Omega} \operatorname{div}(\varepsilon u )\, \operatorname{div}(\varepsilon v) \,dx = \sigma \int_{\Omega}\varepsilon u\cdot v \,dx \quad \forall v \in \xn^\varepsilon(\Omega),
\end{equation}
in the unknowns $u \in  \xn^\varepsilon(\Omega)$ and $\sigma \in \mathbb{R}$.
Here $\tau>0$ is any fixed positive real number.
Solutions of problem \eqref{prob:eigen1weak} will then corresponds to solutions $u$ of \eqref{prob:eigen2weak} with $\operatorname{div}(\varepsilon u)=0$ in $\Omega$ (see Theorem \ref{thm:codau} below).
Observe that also the eigenvalues $\sigma$ of problem \eqref{prob:eigen2weak} are non-negative, and that the zero eigenspace of problem \eqref{prob:eigen2weak} (and of problem \eqref{prob:eigen1weak}) coincides with the set $K^\varepsilon(\Omega)$ defined in \eqref{zero:eigenspace:eps}.
%
%

Following a standard procedure, one can convert  problem \eqref{prob:eigen2weak} into an eigenvalue problem
for a compact self-adjoint operator. 
Recall the map $J_\varepsilon$ defined in \eqref{eps:inner:product}, which is nothing but the bilinear form corresponding to the inner product of $L^2_\varepsilon(\Omega)$. Obviously $J_\varepsilon$ can be thought as an operator acting from $L^2_\varepsilon(\Omega)$ to $(\xn^\varepsilon (\Omega))'$.
We define the operator $T_\varepsilon$ from $\xn^\varepsilon (\Omega)$ to its dual $(\xn^\varepsilon (\Omega))'$ by
\begin{equation*}
T_\varepsilon[u][v]:= \int_\Omega \varepsilon u \cdot v \, dx + \int_\Omega \operatorname{curl}u \cdot \operatorname{curl}v \, dx + \tau \int_\Omega \operatorname{div}(\varepsilon u) \, \operatorname{div}(\varepsilon v) \, dx \quad \forall u,v \in \xn^\varepsilon(\Omega).
\end{equation*}
Observe that by the Riesz theorem, $T_\varepsilon$ is a homeomorphism from $\xn^\varepsilon (\Omega)$ to its dual and thus  it can be inverted.
We can therefore define the operator  $S_\varepsilon$, acting  from $L^2_\varepsilon(\Omega)$ to itself, by setting 
\begin{equation} \label{operator:dfn}
S_\varepsilon := \iota_\varepsilon \circ T_\varepsilon^{-1} \circ J_\varepsilon: L^2_\varepsilon(\Omega) \to L^2_\varepsilon(\Omega),
\end{equation}
where $\iota_\varepsilon$ denotes the embedding of $\xn^\varepsilon(\Omega)$ into $L^2_\varepsilon (\Omega)$.
Observe that the space $L^2_\varepsilon(\Omega)$ is equal to $L^2(\Omega)^3$ as a set, and the varying inner products depending on $\varepsilon$ are all equivalent to the standard one. We then have the following lemma.
\begin{lemma}\label{lem:Te}
Let $\varepsilon \in \mathcal{E}$.
Then the operator $S_\varepsilon$ is a self-adjoint operator from $L^2_\varepsilon(\Omega)$ to itself. Moreover, $\sigma$ is an eigenvalue of problem \eqref{prob:eigen2weak} if and only if $\mu=(\sigma +1)^{-1}$ is an eigenvalue of the operator $S_\varepsilon$, the eigenvectors being the same.
\end{lemma}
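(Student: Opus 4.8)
The plan is to reformulate problem \eqref{prob:eigen2weak} in terms of the bilinear forms $T_\varepsilon$ and $J_\varepsilon$ and then to invert $T_\varepsilon$, reading off both the self-adjointness and the spectral correspondence from the symmetry of these forms. First I would record that $S_\varepsilon$ is bounded, being the composition of the bounded maps $J_\varepsilon \colon L^2_\varepsilon(\Omega) \to (\xn^\varepsilon(\Omega))'$, the homeomorphism $T_\varepsilon^{-1}$, and the continuous embedding $\iota_\varepsilon$; hence, to obtain self-adjointness, it suffices to check that $S_\varepsilon$ is symmetric with respect to $\langle \cdot, \cdot\rangle_\varepsilon$.

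For the symmetry, I would fix $u, v \in L^2_\varepsilon(\Omega)$ and set $w := T_\varepsilon^{-1} J_\varepsilon u$ and $z := T_\varepsilon^{-1} J_\varepsilon v$, so that $S_\varepsilon u = w$ and $S_\varepsilon v = z$ as elements of $L^2_\varepsilon(\Omega)$. By the very definition of $T_\varepsilon^{-1}$ these satisfy
\begin{equation*}
T_\varepsilon[w][\phi] = J_\varepsilon[u][\phi], \qquad T_\varepsilon[z][\phi] = J_\varepsilon[v][\phi] \qquad \forall \phi \in \xn^\varepsilon(\Omega).
\end{equation*}
Taking $\phi = w$ in the second identity and $\phi = z$ in the first, and using that $J_\varepsilon$ is symmetric (because $\varepsilon$ is symmetric) and $T_\varepsilon$ is symmetric (because each of its three terms is a symmetric bilinear form), I would get
\begin{equation*}
\langle S_\varepsilon u, v\rangle_\varepsilon = J_\varepsilon[v][w] = T_\varepsilon[z][w] = T_\varepsilon[w][z] = J_\varepsilon[u][z] = \langle u, S_\varepsilon v\rangle_\varepsilon,
\end{equation*}
which yields the self-adjointness of $S_\varepsilon$.

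For the spectral correspondence, adding $\int_\Omega \varepsilon u \cdot v \, dx$ to both sides of \eqref{prob:eigen2weak} turns it into $T_\varepsilon[u][v] = (\sigma+1)\,J_\varepsilon[u][v]$ for all $v \in \xn^\varepsilon(\Omega)$, i.e.\ $T_\varepsilon u = (\sigma+1) J_\varepsilon u$ in $(\xn^\varepsilon(\Omega))'$. Applying $T_\varepsilon^{-1}$ and then $\iota_\varepsilon$ gives $u = (\sigma+1) S_\varepsilon u$, that is $S_\varepsilon u = (\sigma+1)^{-1} u = \mu u$ with the same eigenvector $u$; here the non-negativity $\sigma \geq 0$ guarantees $\sigma+1 \geq 1$, so $\mu$ is well defined. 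Conversely, from $S_\varepsilon u = \mu u$ with $u \neq 0$ I would first observe that $\mu \neq 0$: if $S_\varepsilon u = 0$ then $J_\varepsilon u = 0$ in the dual, whence $\int_\Omega \varepsilon u \cdot \phi \, dx = 0$ for every $\phi \in C^\infty_c(\Omega)^3 \subset \xn^\varepsilon(\Omega)$, and the coercivity of $\varepsilon$ forces $u = 0$; thus $S_\varepsilon$ is injective and $\mu \neq 0$. Then $u = \mu^{-1} T_\varepsilon^{-1} J_\varepsilon u$ lies in $\xn^\varepsilon(\Omega)$, and running the previous computation backwards shows that $u$ solves \eqref{prob:eigen2weak} with $\sigma = \mu^{-1} - 1$.

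The only point requiring genuine care is the repeated identification, through the embedding $\iota_\varepsilon$, of an element of $\xn^\varepsilon(\Omega)$ with its image in $L^2_\varepsilon(\Omega)$; in particular, in the converse implication one must verify that an $L^2_\varepsilon(\Omega)$-eigenvector of $S_\varepsilon$ automatically belongs to $\xn^\varepsilon(\Omega)$, which is exactly what makes that implication legitimate. Beyond this bookkeeping the argument is entirely formal: its substance is the symmetry of the forms $J_\varepsilon$ and $T_\varepsilon$ together with the invertibility of $T_\varepsilon$ already granted by the Riesz theorem.
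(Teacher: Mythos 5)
Your proof is correct and follows essentially the same route as the paper's: the same chain of equalities exploiting the symmetry of $J_\varepsilon$ and $T_\varepsilon$ for self-adjointness (you merely name the intermediate elements $w$ and $z$ where the paper composes operators inline), and the same rewriting $T_\varepsilon[u]=(\sigma+1)J_\varepsilon[u]$ for the spectral correspondence. Your additional checks (boundedness of $S_\varepsilon$, injectivity via coercivity of $\varepsilon$, membership $u\in\xn^\varepsilon(\Omega)$ in the converse direction) only make explicit what the paper leaves implicit.
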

\begin{proof}
Since $J_\varepsilon$ and $T_\varepsilon$ are both symmetric we get that 
\begin{equation*}
\begin{split}
J_\varepsilon[S_\varepsilon [u]][v] &= J_\varepsilon [v] [T_\varepsilon^{-1} \circ J_\varepsilon [u]] = T_\varepsilon [T_\varepsilon^{-1} \circ J_\varepsilon [v]] [T_\varepsilon^{-1} \circ J_\varepsilon [u]] \\
&=  T_\varepsilon[T_\varepsilon^{-1} \circ J_\varepsilon [u]] [T_\varepsilon^{-1} \circ J_\varepsilon[v]] = J_\varepsilon [u] [S_\varepsilon [v]] \quad \forall u,v \in L^2_\varepsilon(\Omega),
\end{split}
\end{equation*}
proving that $S_\varepsilon$ is self-adjoint in $L^2_\varepsilon(\Omega)$.

Finally, if $(\sigma, u) \in \mathbb{R} \times \xn^\varepsilon(\Omega)$ is an eigenpair of problem \eqref{prob:eigen2weak}, then $T_\varepsilon [u]= (\sigma +1) J_\varepsilon[u]$. Viceversa, if $(\mu,u) \in \mathbb{R} \times L^2_\varepsilon(\Omega)$ is such that $S_\varepsilon [u] = \mu u$ then $u \in \xn^\varepsilon(\Omega)$ and $T_\varepsilon [u] = \mu^{-1} J_\varepsilon [u]$, and thus $u$ is an eigenvector of problem \eqref{prob:eigen2weak} corresponding to the eigenvalue $\sigma=\mu^{-1}-1$.
\end{proof}
If the space $\xn^\varepsilon(\Omega)$ is compactly embedded into $L^2(\Omega)^3$, which is true under our assumptions on $\varepsilon$ and $\Omega$ (see Weber \cite{We80}), the operator $S_\varepsilon$ is compact and its spectrum consists of $\{0\} \cup \{\mu_n\}_{n \in \mathbb{N}}$ with $\mu_n$ being a decreasing sequence composed of positive eigenvalues of $S_\varepsilon$ of finite multiplicity converging to zero.
Accordingly, by \Cref{lem:Te}, the spectrum of problem \eqref{prob:eigen2weak} is composed by ($\varepsilon$-dependent) non-negative eigenvalues of finite multiplicity which can be arranged in an increasing sequence
\[
0\leq \sigma_1[\varepsilon] \leq \sigma_2[\varepsilon] \leq \cdots \leq \sigma_n[\varepsilon] \leq \cdots \nearrow +\infty.
\]
Here each eigenvalue is repeated in accordance with its multiplicity. Note that the zero eigenvalue has fixed multiplicity depending only on the topology of $\Omega$. 
By the min-max formula every eigenvalue can be variationally characterized as follows:
\begin{equation} \label{minmax:formula}
\sigma_j[\varepsilon] = \min_{\substack{V_j \subset \xn^\varepsilon(\Omega),\\ \operatorname{dim}V_j = j}} \max_{\substack{u \in V_j,\\u \neq 0}} \frac{\int_{\Omega} \abs{\operatorname{curl}u}^2 dx + \tau \int_\Omega \abs{\operatorname{div}(\varepsilon u)}^2 dx}{\int_{\Omega} \varepsilon u \cdot u \, dx}.
\end{equation}
Moreover, we have the following result, in the same spirit of  Costabel and Dauge \cite[Thm 1.1]{CoDo99}. 
\begin{theorem} \label{thm:codau}
Let $\Omega$ be as in \eqref{Omega_def}. Let $\varepsilon \in \mathcal{E}$. 
Then the eigenpairs $(\sigma, u) \in \mathbb{R} \times \xn^\varepsilon(\Omega)$ of problem 
\eqref{prob:eigen2weak} are spanned by the following two disjoint families:
\begin{itemize}
\item[i)] the pairs $( \lambda,  u)   \in \mathbb{R} \times \xn^\varepsilon(\mathrm{div}\,\varepsilon 0,\Omega)$ solutions of  problem   \eqref{prob:eigen1weak};
\item[ii)] the pairs $(\tau \rho,\nabla f)$ where $(\rho,f) \in \mathbb{R} \times  H^1_0(\Omega)$ is an eigenpair of the problem
\begin{equation} \label{dirichlet:problem:simillap}
\begin{cases}
-\operatorname{div}(\varepsilon \nabla f)=\rho f & \text{in }\Omega,\\
f=0 & \text{on }\partial \Omega.
\end{cases}
\end{equation}
\end{itemize}
In particular, the set of eigenvalues of problem  \eqref{prob:eigen2weak} are given by the union of the set of eigenvalues of problem \eqref{prob:eigen1weak} and the set of eigenvalues of the operator 
$-\operatorname{div}(\varepsilon\nabla\cdot)$ with Dirichlet boundary conditions  in $\Omega$ multiplied by  $\tau$.
\end{theorem}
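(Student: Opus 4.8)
The plan is to reduce \eqref{prob:eigen2weak} to two decoupled problems by a Helmholtz-type splitting of $\xn^\varepsilon(\Omega)$ adapted to the weighted inner product $\langle\cdot,\cdot\rangle_\varepsilon$ of \eqref{eps:inner:product}. I set
\[
G:=\set{\nabla f : f\in H^1_0(\Omega),\ \operatorname{div}(\varepsilon\nabla f)\in L^2(\Omega)}.
\]
First I would check that $G\subset\xn^\varepsilon(\Omega)$: a gradient has vanishing $\operatorname{curl}$, and $\nu\times\nabla f=0$ on $\partial\Omega$ because $f\in H^1_0(\Omega)$ has vanishing tangential gradient, so $\nabla f\in H_0(\operatorname{curl},\Omega)$, while the defining condition gives $\nabla f\in H(\operatorname{div}\varepsilon,\Omega)$. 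Next, given $u\in\xn^\varepsilon(\Omega)$, I would solve the auxiliary Dirichlet problem $\operatorname{div}(\varepsilon\nabla f)=\operatorname{div}(\varepsilon u)$ in $\Omega$, $f=0$ on $\partial\Omega$; since $\operatorname{div}(\varepsilon u)\in L^2(\Omega)$ and $\varepsilon$ is coercive, Lax--Milgram yields a unique $f\in H^1_0(\Omega)$, and $u_0:=u-\nabla f$ satisfies $\operatorname{div}(\varepsilon u_0)=0$. This produces the $\langle\cdot,\cdot\rangle_\varepsilon$-orthogonal direct sum
\[
\xn^\varepsilon(\Omega)=\xn^\varepsilon(\mathrm{div}\,\varepsilon 0,\Omega)\oplus G,
\]
orthogonality being the integration-by-parts identity $\int_\Omega\varepsilon u_0\cdot\nabla f\,dx=-\int_\Omega\operatorname{div}(\varepsilon u_0)\,f\,dx=0$, which also forces the sum to be direct.

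Then I would record that both bilinear forms in \eqref{prob:eigen2weak} decouple across this splitting: for $u_0\in\xn^\varepsilon(\mathrm{div}\,\varepsilon 0,\Omega)$ and $\nabla g\in G$ one has $\operatorname{curl}\nabla g=0$ and $\operatorname{div}(\varepsilon u_0)=0$, so the $\operatorname{curl}$ and penalty cross terms vanish, while $\int_\Omega\varepsilon u_0\cdot\nabla g\,dx=0$ by the orthogonality above. Since test functions of the two types together span $\xn^\varepsilon(\Omega)$, testing \eqref{prob:eigen2weak} against $v\in\xn^\varepsilon(\mathrm{div}\,\varepsilon 0,\Omega)$ and against $v=\nabla g\in G$ is equivalent to the full identity. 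Writing an eigenpair $(\sigma,u)$ as $u=u_0+\nabla f$, the cross terms drop and I obtain that $u_0$ and $\nabla f$ are each, when nonzero, eigenvectors for the same $\sigma$ lying in the respective blocks; this shows the two families span all eigenpairs, and disjointness follows since nonzero vectors of the two blocks are $\langle\cdot,\cdot\rangle_\varepsilon$-orthogonal. Moreover on $\xn^\varepsilon(\mathrm{div}\,\varepsilon 0,\Omega)$ the penalty term is identically zero, so the reduced identity there is exactly \eqref{prob:eigen1weak} with $\lambda=\sigma$, giving family i).

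The core of the argument is the gradient block, where the reduced identity reads $\tau\int_\Omega\operatorname{div}(\varepsilon\nabla f)\operatorname{div}(\varepsilon\nabla g)\,dx=\sigma\int_\Omega\varepsilon\nabla f\cdot\nabla g\,dx$ for all $\nabla g\in G$. I would introduce the Dirichlet operator $A=-\operatorname{div}(\varepsilon\nabla\cdot)$, which under our assumptions is positive, self-adjoint on $L^2(\Omega)$ with compact resolvent, hence admits an $L^2(\Omega)$-orthonormal eigenbasis $\set{f_k}$ with eigenvalues $\rho_k>0$; note $G=\set{\nabla f : f\in D(A)}$. The direct inclusion is the short computation that, when $(\rho,f)$ solves \eqref{dirichlet:problem:simillap}, the pair $(\tau\rho,\nabla f)$ solves the block identity, via $\operatorname{div}(\varepsilon\nabla f)=-\rho f$ and one integration by parts. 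For the converse, testing the block identity against $g=f_k$ and using $\int_\Omega\varepsilon\nabla f\cdot\nabla f_k\,dx=\langle Af,f_k\rangle_{L^2(\Omega)}=\rho_k\langle f,f_k\rangle_{L^2(\Omega)}$ together with $\int_\Omega\operatorname{div}(\varepsilon\nabla f)\operatorname{div}(\varepsilon\nabla f_k)\,dx=\rho_k^2\langle f,f_k\rangle_{L^2(\Omega)}$ gives $\rho_k(\tau\rho_k-\sigma)\langle f,f_k\rangle_{L^2(\Omega)}=0$ for every $k$; since $\rho_k>0$, every component with $\sigma\neq\tau\rho_k$ vanishes, and by completeness of $\set{f_k}$ the function $f$ lies in the $A$-eigenspace for $\rho=\sigma/\tau$, whence $(\sigma,\nabla f)=(\tau\rho,\nabla f)$ is of type ii).

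Combining the three steps yields the decomposition into the two disjoint families, and reading off eigenvalues gives the final claim that the spectrum of \eqref{prob:eigen2weak} is the union of the spectrum of \eqref{prob:eigen1weak} and $\tau$ times the Dirichlet spectrum of $-\operatorname{div}(\varepsilon\nabla\cdot)$. I expect the only genuine obstacle to be the gradient block: one must keep in mind that it is the \emph{penalty} term, not the $\operatorname{curl}$ term, that encodes the operator $A$, and that the cleanest rigorous matching of the two spectra proceeds through the eigenbasis of $A$ as above rather than through a formal manipulation of $\operatorname{div}(\varepsilon\nabla\cdot)$.
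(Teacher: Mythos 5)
Your proof is correct, but it is organized quite differently from the paper's. You first construct the $\varepsilon$-weighted Helmholtz decomposition $\xn^\varepsilon(\Omega)=\xn^\varepsilon(\mathrm{div}\,\varepsilon 0,\Omega)\oplus G$ via Lax--Milgram, observe that both bilinear forms in \eqref{prob:eigen2weak} are block-diagonal with respect to it, and then settle the gradient block by expanding in the $L^2$-orthonormal eigenbasis of the Dirichlet operator $A=-\operatorname{div}(\varepsilon\nabla\cdot)$; spanning and disjointness then come for free from the $\langle\cdot,\cdot\rangle_\varepsilon$-orthogonality of the two blocks. The paper instead works directly with a single eigenpair $(\sigma,u)$: it sets $p:=\operatorname{div}(\varepsilon u)$, tests against gradients to obtain $\int_\Omega p\left(\tau\operatorname{div}(\varepsilon\nabla\psi)+\sigma\psi\right)dx=0$, invokes the Fredholm alternative to conclude that $p$ itself is a Dirichlet eigenfunction with eigenvalue $\sigma/\tau$, and then corrects $u$ by the field $w:=u+\frac{\tau}{\sigma}\nabla p$, which is shown to be either zero (so $u$ is of type ii)) or a Maxwell eigenvector. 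The two arguments encode the same splitting --- your $f$ and the paper's $-\frac{\tau}{\sigma}p$ coincide by uniqueness for the auxiliary Dirichlet problem --- but the architecture differs genuinely. Your route buys a cleaner structural picture: the decomposition simultaneously diagonalizes both forms, the eigenspace of \eqref{prob:eigen2weak} visibly splits as (Maxwell eigenspace) $\oplus$ (gradients of the $A$-eigenspace), and you never divide by $\sigma$, so no implicit case analysis for $\sigma=0$ is needed. The paper's route is more economical in machinery: it needs only the Fredholm alternative rather than the full spectral theorem for $A$ and an a priori decomposition of the space, at the price of a slightly more ad hoc construction of $w$ and the (harmless, since $p\neq 0$ forces $\sigma\neq 0$) division by $\sigma$.
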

 
\begin{proof}
It is easily seen that if $(\lambda, u) \in \mathbb{R} \times \xn^\varepsilon(\mathrm{div}\,\varepsilon 0,\Omega)$ is an eigenpair of problem  \eqref{prob:eigen1weak}, then it is an eigenpair of problem \eqref{prob:eigen2weak}. Moreover, if $u = \nabla f$, where $f \in H^1_0(\Omega)$ is a solution of problem \eqref{dirichlet:problem:simillap}, then $u \in \xn^\varepsilon(\Omega)$  solves  \eqref{prob:eigen2weak} with $\sigma=\tau\rho$.

Conversely, suppose that $(\sigma, u) \in \mathbb{R} \times \xn^\varepsilon(\Omega)$  is an eigenpair
 of problem 
 \eqref{prob:eigen2weak}. If 
 \[
p:= \mathrm{div}(\varepsilon u) =0,
\] 
then clearly  $u \in  \xn^\varepsilon(\mathrm{div}\,\varepsilon 0,\Omega)$ and solves \eqref{prob:eigen1weak}. Suppose now that $p \neq 0$.
  We set
\[
H^1_0(\Omega,\mathrm{div}(\varepsilon \nabla\cdot)) := \{u \in H^1_0(\Omega): \mathrm{div}(\varepsilon \nabla u)  \in L^2(\Omega)\}.
\]
Then for all $\psi \in H^1_0(\Omega,\mathrm{div}(\varepsilon \nabla\cdot))$, by taking $\nabla \psi$ 
as test functions in \eqref{prob:eigen2weak} we get
\begin{equation*}
\int_\Omega \tau \, p \, \operatorname{div}(\varepsilon \nabla \psi) \, dx= \sigma \int_\Omega \varepsilon u \cdot \nabla \psi \, dx =  - \sigma \int_\Omega p \, \psi \, dx,
\end{equation*}
thus
\begin{equation} \label{eq:p:and:div}
\int_\Omega p \left( \tau \, \operatorname{div}(\varepsilon \nabla \psi) + \sigma \psi \right) \, dx =0.
\end{equation}
Necessarily $\sigma/\tau$ belongs to the spectrum of the operator $-\operatorname{div}(\varepsilon \nabla\cdot)$ with Dirichlet boundary conditions, because if not we could find a $\hat{\psi}$ such that $\operatorname{div}(\varepsilon \nabla \hat{\psi}) + \frac{\sigma}{\tau}\hat{\psi}=p$, hence from \eqref{eq:p:and:div} we would get $p=0$, which is a contradiction. From the Fredholm alternative we deduce that $p$ belongs to the associated eigenspace, thus $p \in H^1_0(\Omega,\mathrm{div}(\varepsilon \nabla\cdot))$ and 
\begin{equation} \label{diff:equation:for:p}
\operatorname{div}(\varepsilon \nabla p) + \frac{\sigma}{\tau} \, p =0.
\end{equation}
Now, we define the field 
\[
w := u + \frac{\tau}{\sigma} \, \nabla p \in \xn^\varepsilon(\Omega).
\]
If $w=0$ then $u  = - \frac{\tau}{\sigma} \, \nabla p$, and recalling \eqref{diff:equation:for:p} one deduces that 
$(\sigma,u)$ is of the form in ii).
Therefore, suppose that $w \neq 0$. Observe that $w$ satisfies 
\begin{equation*}
\operatorname{div}(\varepsilon w)=p + \frac{\tau}{\sigma} \operatorname{div}(\varepsilon \nabla p) =0 \quad \text{and} \quad \operatorname{curl}w = \operatorname{curl}u.
\end{equation*}
Hence for any $v \in \xn^\varepsilon(\Omega)$ 
\begin{equation*}
\begin{split}
\int_\Omega \operatorname{curl}w \cdot \operatorname{curl}v \, dx &= \int_\Omega \left( \sigma  \, \varepsilon u \cdot v - \tau \, p \, \operatorname{div}(\varepsilon v) \right) dx = \int_\Omega ( \sigma \, \varepsilon u + \tau \, \varepsilon \nabla p) \cdot v \, dx \\
&= \sigma \int_\Omega \varepsilon w \cdot v \, dx.
\end{split}
\end{equation*}
Thus the pair $(\sigma, w)$ belongs to the  family in i) and $\sigma$ is a multiple eigenvalue of \eqref{prob:eigen2weak}. In this case we can split the eigenspace corresponding to $\sigma$ according to the two families in i) and ii).
\end{proof}
In view of the previous theorem, we introduce the following definition.
\begin{definition}\label{defi:maxeig}
Let $\Omega$ be as in \eqref{Omega_def}. Let $\varepsilon \in \mathcal{E}$.
An eigenvalue $\sigma$ of problem \eqref{prob:eigen2weak} is said to be 
  a \emph{Maxwell eigenvalue} if  there exists $u \in \xn^\varepsilon(\mathrm{div}\,\varepsilon 0,\Omega)$, $u \neq 0$, such that $(\sigma, u)  $ is an eigenpair of problem \eqref{prob:eigen1weak}.
  In this case, we say that $u$ is a \emph{Maxwell eigenvector}. We denote the set of Maxwell eigenvalues by:
 \[
0\leq \lambda_1[\varepsilon] \leq \lambda_2[\varepsilon] \leq \cdots \leq \lambda_n[\varepsilon] \leq \cdots \nearrow +\infty,
\]
where we repeat the eigenvalues in accordance with their (Maxwell) multiplicity,  i.e. the dimension of the space generated by the corresponding Maxwell eigenvectors. 
\end{definition}

We stress that the introduction of problem  \eqref{prob:eigen2weak} is of technical nature to bypass the problem of working in $\varepsilon$-dependent spaces, but in this  paper we are mostly interested in the behavior of Maxwell eigenvalues. Accordingly, we will focus more on the behavior of  $\{\lambda_j[\varepsilon]\}_{j \in \mathbb{N}} \subseteq  \{\sigma[\varepsilon]\}_{j \in \mathbb{N}}$ than on the 
one of all  $\{\sigma[\varepsilon]\}_{j \in \mathbb{N}}$.
 Note also that the Maxwell eigenvalues $\{\lambda_j[\varepsilon]\}_{j \in \mathbb{N}}$ do not depend upon the choice of the parameter $\tau>0$ multiplying  the penalty 
term of problem  \eqref{prob:eigen2weak}, meaning that different values of $\tau$ provide exactly the same Maxwell spectrum.

\section{Continuity of the eigenvalues}\label{sec:cont}

We first focus on the continuity of the eigenvalues $\sigma_j[\varepsilon]$ of problem \eqref{prob:eigen2weak}, which in particular
implies the continuity of the Maxwell eigenvalues $\lambda_j[\varepsilon]$. 
For the sake of simplicity, in this section we will fix $\tau=1$. Note that the results presented below remain valid independently of the value of $\tau >0$.

We find it convenient to introduce the space
\begin{equation} \label{dfn:H1N}
H^1_{\rm \scriptscriptstyle N}(\Omega) : = \set{u \in H^1(\Omega)^3 : \nu \times u =0 \text{ on } \partial \Omega},
\end{equation}
endowed with the usual $H^1$-norm. 
Note that in view of  formula \eqref{div:matrixxvector}  and of the Gaffney inequality \eqref{ineq:GF} (valid under our assumptions \eqref{Omega_def})   the spaces $\xn^\varepsilon(\Omega)$ and $H^1_{\rm \scriptscriptstyle N}(\Omega)$ coincide as sets  for every $\varepsilon \in \mathcal{E}$, and their respective norms are equivalent.
Hence one can use the space $H^1_{\rm \scriptscriptstyle N}(\Omega)$ for the variational characterization of the eigenvalues: the benefit lies in the fact that in this way we do not have to deal with Hilbert spaces that may depend on the permittivity parameter $\varepsilon$, allowing us to compare Rayleigh quotients  relative to different permittivities.
In other words, the min-max characterization \eqref{minmax:formula} can be equivalently written as
\begin{equation} \label{minmax:formula:H1}
\sigma_j[\varepsilon] = \min_{\substack{V_j \subset H^1_{\rm \scriptscriptstyle N}(\Omega),\\ \operatorname{dim}V_j = j}} \max_{\substack{u \in V_j,\\u \neq 0}} \frac{\int_{\Omega} \abs{\operatorname{curl}u}^2 dx +  \int_\Omega \abs{\operatorname{div}(\varepsilon u)}^2 dx}{\int_{\Omega} \varepsilon u \cdot u \, dx},
\end{equation}
which is the one we will exploit in order to prove our continuity result.

Before doing so, we first prove a locally uniform Gaffney inequality, that can be obtained by exploiting the standard inequality \eqref{ineq:GF} for a fixed permittivity.
\begin{proposition}\label{prop:ugi}
Let $\Omega$ be as in \eqref{Omega_def}. Let $\tilde{\varepsilon} \in \mathcal{E}$. Then there exist two constants $\delta,  C_\mathcal{G}>0$ such that  \begin{equation} \label{uniform:permittivity:gaffney:ineq}
\|u\|_{H^1(\Omega)^3}^2 \leq C_\mathcal{G} \left( \langle \varepsilon u, u\rangle_{L^2(\Omega)^3} + \|\operatorname{curl}u\|^2_{L^2(\Omega)^3} + \|\operatorname{div}(\varepsilon u)\|^2_{L^2(\Omega)} \right)
\end{equation}
for all $u \in H^1_{\rm \scriptscriptstyle N}(\Omega)$ and for all $\varepsilon \in \mathcal{E}$ with $\|\varepsilon - \tilde{\varepsilon}\|_{W^{1,\infty}(\Omega)} < \delta$.
\end{proposition}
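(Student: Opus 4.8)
The plan is to bootstrap from the fixed-permittivity Gaffney inequality \eqref{ineq:GF} applied at the base point $\tilde\varepsilon$, and then to compare, term by term, the three quantities on its right-hand side with those built from a nearby $\varepsilon$. Denoting by $C_{\tilde\varepsilon}$ the constant that \eqref{ineq:GF} provides at $\tilde\varepsilon$, every $u \in H^1_{\rm \scriptscriptstyle N}(\Omega)$ satisfies
\begin{equation*}
\|u\|_{H^1(\Omega)^3}^2 \leq C_{\tilde\varepsilon}\left( \langle \tilde\varepsilon u, u\rangle_{L^2(\Omega)^3} + \|\operatorname{curl}u\|^2_{L^2(\Omega)^3} + \|\operatorname{div}(\tilde\varepsilon u)\|^2_{L^2(\Omega)} \right).
\end{equation*}
The curl term is independent of the permittivity and needs no adjustment. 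For the zero-order term I would use the elementary bound recorded in the preliminaries, $|\langle(\tilde\varepsilon-\varepsilon)u,u\rangle_{L^2(\Omega)^3}| \le 3\|\tilde\varepsilon-\varepsilon\|_{L^\infty(\Omega)}\|u\|_{L^2(\Omega)^3}^2$, so that $\langle\tilde\varepsilon u,u\rangle_{L^2(\Omega)^3} \le \langle\varepsilon u,u\rangle_{L^2(\Omega)^3} + 3\delta\|u\|_{H^1(\Omega)^3}^2$ whenever $\|\varepsilon-\tilde\varepsilon\|_{W^{1,\infty}(\Omega)}<\delta$.

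The delicate term is the divergence one, and it is here that I expect the only real obstacle. Since $u\in H^1(\Omega)^3$ and $\tilde\varepsilon - \varepsilon \in W^{1,\infty}(\Omega)$, the product formula \eqref{div:matrixxvector} applies to $(\tilde\varepsilon-\varepsilon)u$ and gives
\begin{equation*}
\operatorname{div}(\tilde\varepsilon u) - \operatorname{div}(\varepsilon u) = \operatorname{div}\big((\tilde\varepsilon-\varepsilon)u\big) = \operatorname{tr}\big((\tilde\varepsilon-\varepsilon)Du\big) + \operatorname{div}(\tilde\varepsilon-\varepsilon)\cdot u.
\end{equation*}
The two summands are pointwise dominated by a dimensional multiple of $\|\tilde\varepsilon-\varepsilon\|_{W^{1,\infty}(\Omega)}\,|Du|$ and of $\|\tilde\varepsilon-\varepsilon\|_{W^{1,\infty}(\Omega)}\,|u|$ respectively (via the trivial matrix-vector inequalities recorded in the preliminaries); integrating and using the bound $\delta$ on $\|\tilde\varepsilon-\varepsilon\|_{W^{1,\infty}(\Omega)}$, one finds $\|\operatorname{div}(\tilde\varepsilon u)-\operatorname{div}(\varepsilon u)\|_{L^2(\Omega)} \le c\,\delta\,\|u\|_{H^1(\Omega)^3}$ for some absolute constant $c$. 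A triangle inequality together with the elementary inequality $(a+b)^2\le 2a^2+2b^2$ then yields $\|\operatorname{div}(\tilde\varepsilon u)\|^2_{L^2(\Omega)} \le 2\|\operatorname{div}(\varepsilon u)\|^2_{L^2(\Omega)} + 2c^2\delta^2\|u\|^2_{H^1(\Omega)^3}$.

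Collecting the three estimates into the Gaffney inequality at $\tilde\varepsilon$ gives
\begin{equation*}
\|u\|_{H^1(\Omega)^3}^2 \le C_{\tilde\varepsilon}\Big(\langle\varepsilon u,u\rangle_{L^2(\Omega)^3} + \|\operatorname{curl}u\|^2_{L^2(\Omega)^3} + 2\|\operatorname{div}(\varepsilon u)\|^2_{L^2(\Omega)}\Big) + C_{\tilde\varepsilon}\big(3\delta + 2c^2\delta^2\big)\|u\|_{H^1(\Omega)^3}^2.
\end{equation*}
The point of the whole argument — and the reason the result is not a free consequence of a small coefficient perturbation — is that the error term carries the \emph{full} $H^1$-norm of $u$, precisely the quantity we are trying to bound on the left; this is forced by the appearance of $Du$ in the trace term above. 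The cure is an absorption argument: I would fix $\delta>0$ small enough, depending only on $C_{\tilde\varepsilon}$ and $c$, that $C_{\tilde\varepsilon}(3\delta + 2c^2\delta^2)\le \tfrac12$, move this term to the left-hand side, and then bound $2\|\operatorname{div}(\varepsilon u)\|^2_{L^2(\Omega)}$ by twice the full bracket to reach \eqref{uniform:permittivity:gaffney:ineq} with $C_\mathcal{G}=4C_{\tilde\varepsilon}$.
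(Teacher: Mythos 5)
Your proof is correct, and its skeleton coincides with the paper's: start from the fixed-permittivity Gaffney inequality \eqref{ineq:GF} at $\tilde{\varepsilon}$, perturb each term of the right-hand side, and absorb the error--which unavoidably carries the full $\norm{u}^2_{H^1(\Omega)^3}$--by taking $\delta$ small. Where you genuinely diverge is in the treatment of the divergence term. You exploit linearity of the divergence in the matrix argument, writing $\operatorname{div}(\tilde{\varepsilon}u)-\operatorname{div}(\varepsilon u)=\operatorname{div}\big((\tilde{\varepsilon}-\varepsilon)u\big)$, apply the product rule \eqref{div:matrixxvector} once to the difference matrix, and finish with a triangle inequality and $(a+b)^2\le 2a^2+2b^2$. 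The paper instead expands both squared norms via \eqref{div:matrixxvector} and estimates the \emph{difference of squares} term by term, factoring $a^2-b^2=(a+b)(a-b)$ and bounding the cross terms separately, which yields the estimate \eqref{diff:div:estimate}. Your route is shorter and more elementary; its price is the factor $2$ in front of $\norm{\operatorname{div}(\varepsilon u)}^2_{L^2(\Omega)}$, which forces the final doubling and the constant $C_\mathcal{G}=4C_{\tilde{\varepsilon}}$, whereas the paper's constant tends to $C_{\tilde{\varepsilon}}$ as $\delta\to 0$. The paper's longer computation also buys something beyond the proposition itself: the bound on the difference of the squared divergence norms is precisely what gets recycled (as \eqref{diff:div:estimate:formula2}) in the proof of the Lipschitz continuity result, Theorem \ref{thm:loclip}, where one needs to compare the two Rayleigh quotients, not merely the $L^2$ distance of the divergences--though a bound of your type would also suffice there after one more factorization. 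One small point worth a word in a final write-up: the inequalities \eqref{stima:trace} and \eqref{stima:diveps} are stated in the paper for $\varepsilon'\in\mathcal{E}$, while you apply them to $\tilde{\varepsilon}-\varepsilon$, which is symmetric but not coercive; this is harmless, since those bounds use only the $W^{1,\infty}$ norm and never the ellipticity, but you should say so explicitly.
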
 
\begin{proof}
First of all, we observe that  if $\varepsilon' \in \mathcal{E}$ then 
by formula \eqref{div:matrixxvector}  we have that 
\begin{equation}\label{PK}
\operatorname{div}(\varepsilon' u) = \operatorname{tr}(\varepsilon' Du) + (\operatorname{div}\varepsilon') \cdot u.
\end{equation}
Moreover, if  $M$ is a $3 \times 3$ matrix then the following inequalities
\begin{equation} \label{stima:trace}
|\operatorname{tr}(\varepsilon'(x) M)| \leq 9 \norm{\varepsilon'}_{L^\infty(\Omega)} |M|,
\end{equation}
\begin{equation} \label{stima:diveps}
|\operatorname{div}\varepsilon'(x)| \leq 3 \sqrt{3} \norm{\varepsilon'}_{W^{1,\infty}(\Omega)}
\end{equation}
hold for a.e. $x \in \Omega$, where $|M|$ denotes the matrix norm $|M| := \max_{i,j}|M_{ij}|$.

Fix  $u \in H^1_{\rm \scriptscriptstyle N}(\Omega)$ and $\varepsilon \in \mathcal{E}$.
From \eqref{ineq:GF} we have that the Gaffney inequality holds for $\tilde{\varepsilon}$, namely there exists a constant $C_{\tilde{\varepsilon}} >0$ independent of $u$ such that
\begin{equation} \label{gaffney:ineq:tilde:eps}
\|u\|_{H^1(\Omega)^3}^2 \leq C_{\tilde{\varepsilon}} \left( \langle \tilde\varepsilon u, u\rangle_{L^2(\Omega)^3} + \|\operatorname{curl}u\|^2_{L^2(\Omega)^3} + \|\operatorname{div}\tilde\varepsilon u\|^2_{L^2(\Omega)} \right).
\end{equation}
Moreover
\begin{align*}
\abs{\operatorname{tr}(\tilde\varepsilon Du)^2 - \operatorname{tr}(\varepsilon Du)^2} &= \abs{\operatorname{tr}\left( (\tilde\varepsilon + \varepsilon) Du\right) \operatorname{tr}\left( (\tilde\varepsilon - \varepsilon) Du) \right)} 
\\
&  \leq 9^2 \, \norm{\tilde\varepsilon +\varepsilon}_{L^\infty(\Omega)} \norm{\tilde\varepsilon -\varepsilon}_{L^\infty(\Omega)} \, |Du|^2,
\end{align*}
and
\begin{align*}
\abs{\left( \operatorname{div}\tilde\varepsilon \cdot u\right)^2 - \left( \operatorname{div}\varepsilon \cdot u\right)^2}& = \abs{\left( \operatorname{div}(\tilde\varepsilon- \varepsilon) \cdot u \right) \left( \operatorname{div} (\tilde\varepsilon + \varepsilon) \cdot u  \right)} \\
 &\leq (3\sqrt{3})^2 \, \norm{\tilde\varepsilon +\varepsilon}_{W^{1,\infty}(\Omega)} \norm{\tilde\varepsilon -\varepsilon}_{W^{1,\infty}(\Omega)} \, |u|^2
\end{align*}
and 
\begin{equation*}
\begin{split}
&2 \abs{\operatorname{tr}(\tilde\varepsilon Du) \ \operatorname{div}\tilde\varepsilon \cdot u - \operatorname{tr}(\varepsilon Du) \ \operatorname{div}\varepsilon \cdot u} \\
& \quad \leq 2 \abs{\operatorname{tr}(\tilde\varepsilon Du) \  \operatorname{div}(\tilde\varepsilon- \varepsilon) \cdot u} + 2 \abs{\operatorname{tr}\left((\tilde\varepsilon-\varepsilon) Du \right) \ \operatorname{div}\varepsilon \cdot u}
\\
& \quad \leq 2\cdot 9\cdot3\sqrt{3} \, \left(\norm{\tilde\varepsilon}_{W^{1,\infty}(\Omega)} + \norm{\tilde\varepsilon -\varepsilon}_{W^{1,\infty}(\Omega)}  \right) \norm{\tilde\varepsilon -\varepsilon}_{W^{1,\infty}(\Omega)} 2 \, |u|\,|Du|  \\
& \quad \leq 54\sqrt{3} \, \left(\norm{\tilde\varepsilon}_{W^{1,\infty}(\Omega)} + \norm{\tilde\varepsilon -\varepsilon}_{W^{1,\infty}(\Omega)}  \right) \norm{\tilde\varepsilon -\varepsilon}_{W^{1,\infty}(\Omega)} (\,|u|^2 + |Du|^2).
\end{split}
\end{equation*}
Thus
\begin{equation} \label{diff:div:estimate}
\begin{split}
&\abs{\,\norm{\operatorname{div}(\tilde{\varepsilon}u)}_{L^2(\Omega)}^2 - \norm{\operatorname{div}(\varepsilon u)}_{L^2(\Omega)}^2} \\
& \qquad \leq  54\sqrt{3} \, \left(\norm{\tilde\varepsilon}_{W^{1,\infty}(\Omega)} + \norm{\tilde\varepsilon -\varepsilon}_{W^{1,\infty}(\Omega)} + \norm{\tilde\varepsilon +\varepsilon}_{W^{1,\infty}(\Omega)}  \right)\\
& \qquad\qquad \times  \norm{\tilde\varepsilon -\varepsilon}_{W^{1,\infty}(\Omega)} \left (\int_\Omega |u|^2 + \int_\Omega |Du|^2 \right).
\end{split}
\end{equation}
Moreover, we have that  
\begin{equation} \label{stima:diff:eps:tildeeps:L2}
\abs{\langle \tilde\varepsilon u, u\rangle_{L^2(\Omega)^3} - \langle \varepsilon u, u\rangle_{L^2(\Omega)^3}} \leq 3 \norm{\tilde{\varepsilon} - \varepsilon}_{L^\infty(\Omega)} \norm{u}^2_{L^2(\Omega)^3}.
\end{equation}
Therefore, making use of \eqref{diff:div:estimate} and \eqref{stima:diff:eps:tildeeps:L2} in \eqref{gaffney:ineq:tilde:eps} 
we obtain that
\begin{equation*}
\begin{split}
\norm{u}_{H^1(\Omega)^3}^2 &\leq C_{\tilde\varepsilon} \left( \langle \varepsilon u, u\rangle_{L^2(\Omega)^3} + \norm{\operatorname{curl}u}^2_{L^2(\Omega)^3} + \norm{\operatorname{div}\varepsilon u}^2_{L^2(\Omega)} \right) + \\
& + C_{\tilde\varepsilon} \left( \,  54\sqrt{3} \, \left(\norm{\tilde\varepsilon}_{W^{1,\infty}(\Omega)} + \norm{\tilde\varepsilon -\varepsilon}_{W^{1,\infty}(\Omega)} + \norm{\tilde\varepsilon +\varepsilon}_{W^{1,\infty}(\Omega)}  \right) +3\right)\norm{\tilde\varepsilon -\varepsilon}_{W^{1,\infty}(\Omega)} \norm{u}^2_{H^1(\Omega)^3}\\
 &\leq C_{\tilde\varepsilon} \left( \langle \varepsilon u, u\rangle_{L^2(\Omega)^3} + \norm{\operatorname{curl}u}^2_{L^2(\Omega)^3} + \norm{\operatorname{div}\varepsilon u}^2_{L^2(\Omega)} \right) + \\
& + C_{\tilde\varepsilon} \left( \,  3\cdot 54\sqrt{3} \, \left(\norm{\tilde\varepsilon}_{W^{1,\infty}(\Omega)} + \norm{\tilde\varepsilon -\varepsilon}_{W^{1,\infty}(\Omega)}    \right)+3\right) \norm{\tilde\varepsilon -\varepsilon}_{W^{1,\infty}(\Omega)} \norm{u}^2_{H^1(\Omega)^3}.
\end{split}
\end{equation*}
Hence, taking $\delta>0$ small enough such that for all $\varepsilon \in \mathcal{E}$ with $\|\tilde\varepsilon -\varepsilon\|_{W^{1,\infty}(\Omega)} <\delta$ we have that
\begin{equation*}
 1-C_{\tilde\varepsilon}  \left( \, 162\sqrt{3} \, \left(\norm{\tilde\varepsilon}_{W^{1,\infty}(\Omega)} + \norm{\tilde\varepsilon -\varepsilon}_{W^{1,\infty}(\Omega)}  \right)+3\right)\norm{\tilde\varepsilon -\varepsilon}_{W^{1,\infty}(\Omega)}  >0,
\end{equation*}
then we get that
formula \eqref{uniform:permittivity:gaffney:ineq} holds with 
\begin{equation*}
C_\mathcal{G} := \frac{ C_{\tilde\varepsilon} }{ 1-\delta \,C_{\tilde\varepsilon}\left(  \, 162\sqrt{3} \, \left(\norm{\tilde\varepsilon}_{W^{1,\infty}(\Omega)} + \delta \right)+3\right)}.
\end{equation*}
\end{proof}

We are now ready to show that the eigenvalues  $\sigma_j[\varepsilon]$ of problem \eqref{prob:eigen2weak} are 
locally Lipschitz continuous in $\varepsilon$.
\begin{theorem} \label{thm:loclip}
Let $\Omega$ be as in \eqref{Omega_def}. Let $j\in\mathbb{N}$ and $\varepsilon_1 \in \mathcal{E}$.  Then there exist two constants  $\delta, \tilde{C}>0$ such that
\begin{equation} \label{eigenarestronglycont}
\abs{\sigma_j[\varepsilon_1] - \sigma_j[\varepsilon_2]} \leq \tilde{C} \norm{\varepsilon_1 -\varepsilon_2}_{W^{1,\infty}(\Omega)}
\end{equation}
for all $\varepsilon_2 \in \mathcal{E}$ such that $\norm{\varepsilon_1 -\varepsilon_2}_{W^{1,\infty}(\Omega)} < \delta$.
\end{theorem}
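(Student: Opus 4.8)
The plan is to work entirely in the $\varepsilon$-independent space $H^1_{\rm \scriptscriptstyle N}(\Omega)$ and to exploit the min-max characterization \eqref{minmax:formula:H1}, comparing the Rayleigh quotients
\[
R_\varepsilon[u] := \frac{\int_\Omega |\operatorname{curl}u|^2\,dx + \int_\Omega |\operatorname{div}(\varepsilon u)|^2\,dx}{\int_\Omega \varepsilon u \cdot u\,dx} =: \frac{N_\varepsilon[u]}{D_\varepsilon[u]}
\]
relative to $\varepsilon_1$ and $\varepsilon_2$ on one and the same test function $u \in H^1_{\rm \scriptscriptstyle N}(\Omega)\setminus\{0\}$. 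Since the denominator is bounded below by $c_\varepsilon\,\|u\|_{L^2(\Omega)^3}^2>0$, I would start from the algebraic identity
\[
R_{\varepsilon_1}[u] - R_{\varepsilon_2}[u] = \frac{N_{\varepsilon_1}[u] - N_{\varepsilon_2}[u]}{D_{\varepsilon_1}[u]} + R_{\varepsilon_2}[u]\,\frac{D_{\varepsilon_2}[u] - D_{\varepsilon_1}[u]}{D_{\varepsilon_1}[u]}
\]
and estimate the two summands separately.

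For the first summand the curl contributions cancel, so $N_{\varepsilon_1}[u]-N_{\varepsilon_2}[u]=\|\operatorname{div}(\varepsilon_1 u)\|_{L^2(\Omega)}^2-\|\operatorname{div}(\varepsilon_2 u)\|_{L^2(\Omega)}^2$, which by \eqref{diff:div:estimate} is bounded by $C\,\|\varepsilon_1-\varepsilon_2\|_{W^{1,\infty}(\Omega)}\,\|u\|_{H^1(\Omega)^3}^2$, with $C$ uniform for $\varepsilon_2$ in a $W^{1,\infty}$-ball around $\varepsilon_1$. For the second summand, \eqref{stima:diff:eps:tildeeps:L2} gives $|D_{\varepsilon_2}[u]-D_{\varepsilon_1}[u]|\le 3\|\varepsilon_1-\varepsilon_2\|_{L^\infty(\Omega)}\|u\|_{L^2(\Omega)^3}^2$, and dividing by $D_{\varepsilon_1}[u]\ge c_{\varepsilon_1}\|u\|_{L^2(\Omega)^3}^2$ at once disposes of the $L^2$-factor. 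The only quantity not yet controlled is the ratio $\|u\|_{H^1(\Omega)^3}^2/D_{\varepsilon_1}[u]$ in the first summand, and this is exactly where I would invoke Proposition \ref{prop:ugi}: taking $\tilde\varepsilon=\varepsilon_1$ there produces $\delta,C_{\mathcal G}>0$ with $\|u\|_{H^1(\Omega)^3}^2\le C_{\mathcal G}\,D_{\varepsilon_2}[u]\,(1+R_{\varepsilon_2}[u])$ for every $\varepsilon_2$ with $\|\varepsilon_1-\varepsilon_2\|_{W^{1,\infty}(\Omega)}<\delta$; combined with $D_{\varepsilon_2}[u]\le 3\|\varepsilon_2\|_{L^\infty(\Omega)}\|u\|_{L^2(\Omega)^3}^2$ and the lower bound on $D_{\varepsilon_1}[u]$, this controls $\|u\|_{H^1(\Omega)^3}^2/D_{\varepsilon_1}[u]$ by a constant times $(1+R_{\varepsilon_2}[u])$.

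I would then restrict to $u\in V_j$, where $V_j\subset H^1_{\rm \scriptscriptstyle N}(\Omega)$ is the $j$-dimensional subspace realizing the minimum in \eqref{minmax:formula:H1} for $\varepsilon_2$, so that $R_{\varepsilon_2}[u]\le\sigma_j[\varepsilon_2]$ for all such $u$. This turns the offending factor $R_{\varepsilon_2}[u]$ into $\sigma_j[\varepsilon_2]$, and assembling the two bounds yields
\[
R_{\varepsilon_1}[u] \le R_{\varepsilon_2}[u] + \tilde C\,\|\varepsilon_1-\varepsilon_2\|_{W^{1,\infty}(\Omega)} \le \sigma_j[\varepsilon_2] + \tilde C\,\|\varepsilon_1-\varepsilon_2\|_{W^{1,\infty}(\Omega)}
\]
for every nonzero $u\in V_j$. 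Taking the maximum over $u\in V_j$ and applying \eqref{minmax:formula:H1} for $\varepsilon_1$ gives $\sigma_j[\varepsilon_1]\le\sigma_j[\varepsilon_2]+\tilde C\,\|\varepsilon_1-\varepsilon_2\|_{W^{1,\infty}(\Omega)}$. The reverse inequality follows symmetrically, testing the min-max for $\varepsilon_2$ against the optimal subspace for $\varepsilon_1$; note that in this direction the plain Gaffney inequality \eqref{ineq:GF} for the fixed $\varepsilon_1$ already suffices, since the relevant Rayleigh quotients are evaluated on the fixed finite-dimensional eigenspace of $\varepsilon_1$.

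The one delicate point — and what I view as the main obstacle — is the apparent circularity coming from the dependence of $\tilde C$ on $\sigma_j[\varepsilon_2]$. To break it I would first establish a crude a priori upper bound $\sigma_j[\varepsilon_2]\le M$, uniform for $\|\varepsilon_1-\varepsilon_2\|_{W^{1,\infty}(\Omega)}<\delta$: this is immediate from \eqref{minmax:formula:H1} upon testing with the fixed finite-dimensional optimal subspace of $\varepsilon_1$, on which $\varepsilon_2\mapsto\max_{u}R_{\varepsilon_2}[u]$ is continuous and hence locally bounded. With $M$ in hand, and using \eqref{contcoerc} to bound $c_{\varepsilon_2}$ away from zero together with $\|\varepsilon_2\|_{W^{1,\infty}(\Omega)}\le\|\varepsilon_1\|_{W^{1,\infty}(\Omega)}+\delta$, the constant $\tilde C$ becomes genuinely uniform, and the two-sided estimate yields \eqref{eigenarestronglycont}. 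Shrinking $\delta$ if necessary so that Proposition \ref{prop:ugi} applies completes the argument.
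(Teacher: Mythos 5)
Your proposal is correct and follows essentially the same route as the paper: the min-max characterization \eqref{minmax:formula:H1} in the fixed space $H^1_{\rm \scriptscriptstyle N}(\Omega)$, the locally uniform Gaffney inequality of Proposition \ref{prop:ugi} to control $\|u\|_{H^1(\Omega)^3}^2$ by the Rayleigh quotient for the perturbed permittivity, the estimates \eqref{diff:div:estimate}, \eqref{stima:diff:eps:tildeeps:L2} and \eqref{contcoerc}, a two-sided comparison obtained by exchanging the roles of $\varepsilon_1$ and $\varepsilon_2$, and a final uniformization of the Lipschitz constant. The only (immaterial) difference lies in the a priori uniform bound on $\sigma_j[\varepsilon_2]$: the paper compares $\sigma_j[\varepsilon_2]$ with $\sigma_j[I_3]$ via the standard Gaffney inequality with unit permittivity, whereas you test the min-max for $\varepsilon_2$ against the fixed optimal subspace for $\varepsilon_1$, on which local boundedness follows from finite-dimensionality and the same difference estimates; both arguments are sound.
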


\begin{proof}
For the sake of simplicity in this proof, given $\varepsilon \in \mathcal{E}$ and  $u \in H^1_{\rm \scriptscriptstyle N}(\Omega)$,
 we set 
 \[
 \mathcal{R}[u]:= \int_\Omega |\operatorname{curl}u|^2 dx , \qquad \mathcal{D}_\varepsilon[u]:= \int_\Omega |\operatorname{div}(\varepsilon u)|^2 dx .
 \]
Let be $\delta>0$ be as in Proposition \ref{prop:ugi} with $\tilde \varepsilon = \varepsilon_1$.
Let $\varepsilon_2 \in \mathcal{E}$ be such that   $\|\varepsilon_1-\varepsilon_2\|_{W^{1,\infty}(\Omega)}<\delta$ and recall that 
$c_{\varepsilon_1}$, $c_{\varepsilon_2}$ denote
the constants associated with the coercivity of $\varepsilon_1, \varepsilon_2$ respectively (see \eqref{def:cc}).
Fix $u \in H^1_{\rm \scriptscriptstyle N}(\Omega)$. Then
\begin{align} \label{eq:rd1rd2}
&\abs{\frac{\mathcal{R}[u] + \mathcal{D}_{\varepsilon_1}[u]}{\int_\Omega \varepsilon_1 u \cdot u\,dx} - \frac{\mathcal{R}[u] + \mathcal{D}_{\varepsilon_2}[u]}{\int_\Omega \varepsilon_2 u \cdot u\,dx}}
\\  \nonumber
& \quad \leq \frac{\mathcal{R}[u] \abs{\int_\Omega (\varepsilon_2- \varepsilon_1) u \cdot u\,dx} + \abs{\mathcal{D}_{\varepsilon_1}[u] \int_\Omega \varepsilon_2 \, u \cdot u\,dx - \mathcal{D}_{\varepsilon_2}[u] \int_\Omega \varepsilon_1 \, u \cdot u\,dx}}{(\int_\Omega \varepsilon_1 u \cdot u\,dx) (\int_\Omega \varepsilon_2 u \cdot u\,dx)}
\\  \nonumber
& \quad \leq \frac{3 \, \|\varepsilon_2- \varepsilon_1\|_{L^\infty(\Omega)} \mathcal{R}[u] \int_\Omega \abs{u}^2\,dx}{(\int_\Omega \varepsilon_1 u \cdot u\,dx) (\int_\Omega \varepsilon_2 u \cdot u\,dx)}
\\  \nonumber
& \quad \quad  + \frac{\Big|\mathcal{D}_{\varepsilon_1}[u] \int_\Omega \varepsilon_2 \, u \cdot u\,dx - \mathcal{D}_{\varepsilon_1}[u] \int_\Omega \varepsilon_1 \, u \cdot u\,dx \Big|}{(\int_\Omega \varepsilon_1 u \cdot u\,dx) (\int_\Omega \varepsilon_2 u \cdot u\,dx)}\\ \nonumber
&\quad\quad+\frac{\Big|\mathcal{D}_{\varepsilon_1}[u] \int_\Omega \varepsilon_1 \, u \cdot u\,dx - \mathcal{D}_{\varepsilon_2}[u] \int_\Omega \varepsilon_1 \, u \cdot u\,dx\Big|}{(\int_\Omega \varepsilon_1 u \cdot u\,dx) (\int_\Omega \varepsilon_2 u \cdot u\,dx)}
\\ \nonumber
& \quad \leq \frac{3 \, \|\varepsilon_1- \varepsilon_2\|_{W^{1,\infty}(\Omega)}}{c_{\varepsilon_2}} \frac{\mathcal{R}[u] + \mathcal{D}_{\varepsilon_1}[u]}{\int_\Omega \varepsilon_1 u \cdot u\,dx}
+ \frac{\abs{\mathcal{D}_{\varepsilon_1}[u] - \mathcal{D}_{\varepsilon_2}[u]}}{\int_\Omega \varepsilon_2 u \cdot u\,dx}.
\end{align}
We now focus on the second term in the right hand side of the above inequality.
By the same reasoning used to prove inequality  \eqref{diff:div:estimate} we deduce that there exist a constant $C>0$ 
not depending on $\varepsilon_1$, $\varepsilon_2$ and $u$ such that
\begin{equation} \label{diff:div:estimate:formula2}
\begin{split}
&\abs{\mathcal{D}_{\varepsilon_1}[u] - \mathcal{D}_{\varepsilon_2}[u]} \leq C \, \max_{i=1,2}\set{\norm{\varepsilon_i}_{W^{1,\infty}(\Omega)}} \norm{\varepsilon_1 -\varepsilon_2}_{W^{1,\infty}(\Omega)} \left (\int_\Omega |u|^2\,dx + \int_\Omega |Du|^2 \,dx\right).
\end{split}
\end{equation}
Moreover, thanks to the locally uniform Gaffney inequality \eqref{uniform:permittivity:gaffney:ineq} there exists a constant $C_\mathcal{G}>0$ such that for $i=1,2$ 
\begin{equation*}
\int_\Omega |Du|^2\,dx \leq C_\mathcal{G}  \int_\Omega \left( \varepsilon_i u \cdot u + |\operatorname{curl}u|^2 + |\operatorname{div}(\varepsilon_i u)|^2 \right)\,dx.
\end{equation*}
Using the above inequality with $i=2$ we get
\begin{equation*}
\frac{\int_\Omega |Du|^2\,dx}{\int_\Omega \varepsilon_2 u \cdot u\,dx} \leq C_\mathcal{G} \left( 1 + \frac{\mathcal{R}[u] + \mathcal{D}_{\varepsilon_2}[u]}{\int_\Omega \varepsilon_2 u \cdot u\,dx} \right),
\end{equation*}
which applied to \eqref{diff:div:estimate:formula2} yields
\begin{align}\label{eq:d1d2}
\frac{\abs{\mathcal{D}_{\varepsilon_1}[u] - \mathcal{D}_{\varepsilon_2}[u]}}{\int_\Omega \varepsilon_2 u \cdot u\,dx} \leq C \, \max_{i=1,2} &\,\set{\|\varepsilon_i\|_{W^{1,\infty}(\Omega)}}  \|\varepsilon_1 -\varepsilon_2\|_{W^{1,\infty}(\Omega)} \\ \nonumber
&\times \left( \frac{1}{c_{\varepsilon_2}} + C_\mathcal{G} \left( 1 + \frac{\mathcal{R}[u] + \mathcal{D}_{\varepsilon_2}[u]}{\int_\Omega \varepsilon_2 u \cdot u\,dx} \right) \right).
\end{align}
Thus it follows from \eqref{eq:rd1rd2} and  \eqref{eq:d1d2}  that
\begin{equation} \label{carabinieri:autov:gigi}
\begin{split}
&\frac{\mathcal{R}[u] + \mathcal{D}_{\varepsilon_1}[u]}{\int_\Omega \varepsilon_1 u \cdot u\,dx} \left( 1 - 3\, \frac{\norm{\varepsilon_2- \varepsilon_1}_{W^{1,\infty}(\Omega)}}{c_{\varepsilon_2}} \right) 
\\
& \quad \leq \frac{\mathcal{R}[u] + \mathcal{D}_{\varepsilon_2}[u]}{\int_\Omega \varepsilon_2 u \cdot u\,dx} \left( 1 + C_\mathcal{G} \, C \max_{i=1,2} \set{\norm{\varepsilon_i}_{W^{1,\infty}(\Omega)}} \norm{\varepsilon_1 -\varepsilon_2}_{W^{1,\infty}(\Omega)} \right)
\\
& \quad \quad + C \max_{i=1,2} \set{\norm{\varepsilon_i}_{W^{1,\infty}(\Omega)}} \norm{\varepsilon_1 -\varepsilon_2}_{W^{1,\infty}(\Omega)} \left( \frac{1}{c_{\varepsilon_2}} + C_\mathcal{G} \right).
\end{split}
\end{equation}
Eventually taking  a smaller $\delta>0$, 
and taking the appropriate  supremum and infimum in \eqref{carabinieri:autov:gigi}, the min-max formula \eqref{minmax:formula:H1} yields

\begin{equation*}
\begin{split}
\sigma_j[\varepsilon_1] - \sigma_j[\varepsilon_2] &\leq \Bigg( \frac{3}{c_{\varepsilon_2}} \sigma_j[\varepsilon_1]+ C_\mathcal{G} \, C \max_{i=1,2} \set{\norm{\varepsilon_i}_{W^{1,\infty}(\Omega)}} \sigma_j[\varepsilon_2]
\\
& \quad + C \max_{i=1,2} \set{\norm{\varepsilon_i}_{W^{1,\infty}(\Omega)}}\left( \frac{1}{c_{\varepsilon_2}} + C_\mathcal{G} \right) \Bigg) \|\varepsilon_1 -\varepsilon_2\|_{W^{1,\infty}(\Omega)}.
\end{split}
\end{equation*}
Exchanging the role  of $\varepsilon_1$ and $\varepsilon_2$,
we   get the inequality \eqref{eigenarestronglycont} but with a constant possibly depending also on $\varepsilon_2$, which is: 
\begin{align}\label{lipconst}
\widehat C(\varepsilon_2):= & \,\,3\max \set{\frac{\sigma_j[\varepsilon_1]}{c_{\varepsilon_2}}, \frac{\sigma_j[\varepsilon_2]}{c_{\varepsilon_1}}} \\ \nonumber
&+ C\max_{i=1,2} \set{\norm{\varepsilon_i}_{W^{1,\infty}(\Omega)}} \bigg( C_G\max_{i=1,2} \set{\sigma_j[\varepsilon_i]} + \max_{i=1,2} \set{\frac{1}{c_{\varepsilon_i}}} + C_\mathcal{G} \bigg) .
\end{align}
In order to finish the proof, it only remains  to show that this constant can  be chosen uniform in $\varepsilon_2$.  Up to taking a smaller $\delta$, 
we note that by \eqref{contcoerc} the constant $c_{\varepsilon_2}$ is uniformly bounded away from zero in
$\varepsilon_2$. Indeed by  \eqref{contcoerc} 
one has that
\[
c_{\varepsilon_2}\geq c_{\varepsilon_1} -3\delta.
\]
Moreover, $\sigma_j[\varepsilon_2]$ is also locally uniformly bounded in $\varepsilon_2$. Indeed, 
from \eqref{PK}, \eqref{stima:trace} and \eqref{stima:diveps} it is not difficult to see that  there exists a constant $C'>0$ not depending on $\varepsilon_2$ such that
for all $u \in H^1_{\rm \scriptscriptstyle N}(\Omega)$ one has 
\begin{equation*}
\int_\Omega \abs{\operatorname{div}(\varepsilon_2 u)}^2\,dx \leq  C' \norm{\varepsilon_2}_{W^{1,\infty}(\Omega)}^2 \int_\Omega \left( |u|^2 + |Du|^2 \right)\,dx.
\end{equation*}
Then, applying the standard Gaffney inequality (with unitary permittivity) we get  that for all $u \in H^1_{\rm \scriptscriptstyle N}(\Omega)$:
\begin{equation*}
\int_\Omega \abs{\operatorname{div}(\varepsilon_2 u)}^2 \,dx\leq C' \norm{\varepsilon_2}_{W^{1,\infty}(\Omega)}^2 \int_\Omega \left( |u|^2 + |\operatorname{curl}u|^2 + |\operatorname{div}u|^2 \right)\,dx.
\end{equation*}
Hence, using the min-max formula \eqref{minmax:formula:H1} for $\sigma_j[\varepsilon_2]$ we have that
\begin{equation*}
\begin{split}
\sigma_j [\varepsilon_2] &= \min_{\substack{V_j \subset H^1_{\rm \scriptscriptstyle N}(\Omega),\\ \operatorname{dim}V_j = j}} \max_{\substack{u \in V_j,\\u \neq 0}} \frac{\int_\Omega |\operatorname{curl}u|^2\,dx + \int_\Omega |\operatorname{div}(\varepsilon_2 u)|^2\,dx}{\int_\Omega \varepsilon_2 \, u \cdot u\,dx} 
\\
& \leq  \, \frac{C'\norm{\varepsilon_2}^2_{W^{1,\infty}(\Omega)}+1}{c_{\varepsilon_2}} \min_{\substack{V_j \subset H^1_{\rm \scriptscriptstyle N}(\Omega),\\ \operatorname{dim}V_j = j}} \max_{\substack{u \in V_j,\\u \neq 0}} \left( \frac{\int_\Omega |\operatorname{curl}u|^2\,dx + \int_\Omega |\operatorname{div}u|^2\,dx}{\int_\Omega |u|^2\,dx} +1 \right) 
\\
&= \, \frac{C'\norm{\varepsilon_2}^2_{W^{1,\infty}(\Omega)}+1}{c_{\varepsilon_2}} (\sigma_j[I_3] +1)
\\
&\leq \, \frac{C'\left(\norm{\varepsilon_1}_{W^{1,\infty}(\Omega)}+\delta\right)^2+1}{c_{\varepsilon_1}-3\delta} (\sigma_j[I_3] +1),
\end{split}
\end{equation*}
  where  $\sigma_j[I_3]$ is the $j$-th eigenvalue of problem \eqref{prob:eigen2weak} set with unitary permittivity.
 Accordingly, the constant  $\widehat C(\varepsilon_2)$ defined in \eqref{lipconst} is bounded above by a constant 
 $\tilde C$ independent 
 of $\varepsilon_2$ for all $\varepsilon_2 \in \mathcal{E}$ such that $\|\varepsilon_1-\varepsilon_2\|_{W^{1,\infty}(\Omega)}<\delta$. Thus the inequality \eqref{eigenarestronglycont} is proved.
\end{proof}

\section{Analyticity and the derivative in $\varepsilon$}\label{sec:an}

In the previous section we have showed that the eigenvalues $\sigma_j[\varepsilon]$ of the modified problem 
\eqref{prob:eigen2weak} (and in particular the Maxwell eigenvalues $\lambda_j[\varepsilon]$) are locally Lipschitz continuous in  $\varepsilon \in \mathcal{E}$. In this section we are interested in proving higher regularity properties.
More in detail we plan to show  that the eigenvalues depend analytically upon $\varepsilon$, and provide
an explicit formula for their $\varepsilon$-derivative. As already mentioned in the introduction, 
 if we consider  a multiple eigenvalue, a perturbation of the permittivity can in principle   split the eigenvalue into different eigenvalues of lower multiplicity
 and thus the corresponding branches can have a corner at the splitting point. In this case we will not even have differentiability. Our strategy  in order to bypass this problem is to consider the symmetric functions of multiple 
 eigenvalues.  This point of view has been first introduced by Lamberti and Lanza de Cristoforis in  \cite{LaLa04}
  and later successfully  adopted in many other works (see,  e.g.,   \cite{BuLa13, BuLa15, LaZa20, LaLuMu21, LaMuTa21}).

Recall that 
\[
0< \sigma_1[\varepsilon] \leq \sigma_2[\varepsilon] \leq \cdots \leq \sigma_n[\varepsilon] \leq \cdots \nearrow +\infty.
\]
are the eigenvalues of problem  \eqref{prob:eigen2weak}, while instead
\[
0< \lambda_1[\varepsilon] \leq \lambda_2[\varepsilon] \leq \cdots \leq \lambda_n[\varepsilon] \leq \cdots \nearrow +\infty.
\]
are the subset of  Maxwell eigenvalues of problem  \eqref{prob:eigen2weak} (see Definition \ref{defi:maxeig}).
Also recall that, by Lemma \ref{lem:Te}, $\{\sigma_j[\varepsilon]\}_{j \in \mathbb{N}}$ coincide with the reciprocal minus one of the eigenvalues of the operator $S_\varepsilon$ defined in \eqref{operator:dfn}.
In order to obtain an explicit formula for the derivatives  of the Maxwell eigenvalues with respect to the permittivity $\varepsilon$ we need the following technical lemma.

\begin{lemma}\label{lem:der}
Let $\Omega$ be as in \eqref{Omega_def}.
Let $\tilde{\varepsilon} \in \mathcal{E}$ and $\tilde{u}, \tilde{v} \in \xn^{\tilde \varepsilon}(\mathrm{div}\,\tilde\varepsilon 0,\Omega)$ be two Maxwell eigenvectors associated with a Maxwell eigenvalue $\tilde{\lambda}$ with permittivity $\tilde \varepsilon$.   Then
\begin{equation} \label{der:oper}
\langle d\rvert_{\varepsilon = \tilde{\varepsilon}} S_\varepsilon [\eta][\tilde{u}], \tilde{v} \rangle_{\tilde{ \varepsilon}} = \tilde{\lambda}(\tilde{\lambda}+1)^{-2} \int_\Omega \eta \tilde{u} \cdot \tilde{v}\, dx
\end{equation}
for all $\eta \in W^{1,\infty} \left(\Omega\right) \cap  \mathrm{Sym}_3 (\Omega)$.
\end{lemma}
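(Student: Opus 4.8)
The plan is to differentiate the factorization $S_\varepsilon = \iota_\varepsilon \circ T_\varepsilon^{-1}\circ J_\varepsilon$ directly. The crucial preliminary remark, already available from the discussion preceding \eqref{minmax:formula:H1}, is that $\xn^\varepsilon(\Omega)$ coincides with the \emph{fixed} space $H^1_{\rm\scriptscriptstyle N}(\Omega)$ and $L^2_\varepsilon(\Omega)$ with $L^2(\Omega)^3$ as sets, with equivalent norms. Hence I would regard $T_\varepsilon$ and $J_\varepsilon$ as families of bounded operators between the fixed spaces $H^1_{\rm\scriptscriptstyle N}(\Omega)$, $L^2(\Omega)^3$ and their duals, depending on $\varepsilon$ explicitly through the integrands. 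In this viewpoint $J_\varepsilon$ is linear in $\varepsilon$, while $T_\varepsilon$ is a quadratic polynomial in $\varepsilon$ (the only nonlinearity being the penalty term $\tau\int_\Omega\operatorname{div}(\varepsilon u)\operatorname{div}(\varepsilon v)\,dx$); both are therefore real-analytic on $\mathcal E$. Since the inversion of invertible operators is real-analytic and $T_\varepsilon$ is a homeomorphism onto its dual, $\varepsilon\mapsto T_\varepsilon^{-1}$ and hence $\varepsilon\mapsto S_\varepsilon$ are real-analytic. Using $d\rvert_{\tilde\varepsilon}(T_\varepsilon^{-1})[\eta]=-T_{\tilde\varepsilon}^{-1}\,(d\rvert_{\tilde\varepsilon}T_\varepsilon[\eta])\,T_{\tilde\varepsilon}^{-1}$ together with the product rule, and recalling that $T_{\tilde\varepsilon}^{-1}J_{\tilde\varepsilon}\tilde u = S_{\tilde\varepsilon}\tilde u = (\tilde\lambda+1)^{-1}\tilde u$ because $\tilde u$ is a Maxwell eigenvector (Theorem \ref{thm:codau} and Lemma \ref{lem:Te}), I would arrive at
\[
d\rvert_{\varepsilon=\tilde\varepsilon}S_\varepsilon[\eta][\tilde u] = -(\tilde\lambda+1)^{-1}\,T_{\tilde\varepsilon}^{-1}\big(d\rvert_{\tilde\varepsilon}T_\varepsilon[\eta][\tilde u]\big) + T_{\tilde\varepsilon}^{-1}\big(d\rvert_{\tilde\varepsilon}J_\varepsilon[\eta][\tilde u]\big),
\]
where $d\rvert_{\tilde\varepsilon}T_\varepsilon[\eta][\tilde u]$ and $d\rvert_{\tilde\varepsilon}J_\varepsilon[\eta][\tilde u]$ are viewed as elements of $(\xn^{\tilde\varepsilon}(\Omega))'$.

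Next I would pair this with $\tilde v$ in the $\tilde\varepsilon$-inner product, i.e.\ apply $\langle\,\cdot\,,\tilde v\rangle_{\tilde\varepsilon}=J_{\tilde\varepsilon}[\,\cdot\,][\tilde v]$. The key reduction uses the symmetry of $T_{\tilde\varepsilon}$ and $J_{\tilde\varepsilon}$ together with the eigenvector relation $T_{\tilde\varepsilon}^{-1}J_{\tilde\varepsilon}\tilde v=(\tilde\lambda+1)^{-1}\tilde v$ (valid since $\tilde v$ is also a Maxwell eigenvector for $\tilde\lambda$): for any $\phi\in(\xn^{\tilde\varepsilon}(\Omega))'$ one has
\[
J_{\tilde\varepsilon}\big[T_{\tilde\varepsilon}^{-1}\phi\big][\tilde v]=\phi\big[T_{\tilde\varepsilon}^{-1}J_{\tilde\varepsilon}\tilde v\big]=(\tilde\lambda+1)^{-1}\,\phi[\tilde v].
\]
Applying this identity with $\phi=d\rvert_{\tilde\varepsilon}T_\varepsilon[\eta][\tilde u]$ and with $\phi=d\rvert_{\tilde\varepsilon}J_\varepsilon[\eta][\tilde u]$ collapses the whole expression into the two scalar evaluations $d\rvert_{\tilde\varepsilon}T_\varepsilon[\eta][\tilde u][\tilde v]$ and $d\rvert_{\tilde\varepsilon}J_\varepsilon[\eta][\tilde u][\tilde v]$, each multiplied by $(\tilde\lambda+1)^{-1}$.

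It then remains to compute these two forms. Since $J_\varepsilon$ is linear in $\varepsilon$, $d\rvert_{\tilde\varepsilon}J_\varepsilon[\eta][\tilde u][\tilde v]=\int_\Omega\eta\tilde u\cdot\tilde v\,dx$. Differentiating the penalty term of $T_\varepsilon$ gives
\[
d\rvert_{\tilde\varepsilon}T_\varepsilon[\eta][\tilde u][\tilde v]=\int_\Omega\eta\tilde u\cdot\tilde v\,dx+\tau\int_\Omega\big(\operatorname{div}(\eta\tilde u)\operatorname{div}(\tilde\varepsilon\tilde v)+\operatorname{div}(\tilde\varepsilon\tilde u)\operatorname{div}(\eta\tilde v)\big)\,dx,
\]
and here lies the decisive simplification: because $\tilde u,\tilde v$ are Maxwell eigenvectors they satisfy $\operatorname{div}(\tilde\varepsilon\tilde u)=\operatorname{div}(\tilde\varepsilon\tilde v)=0$, so both penalty contributions vanish and $d\rvert_{\tilde\varepsilon}T_\varepsilon[\eta][\tilde u][\tilde v]=\int_\Omega\eta\tilde u\cdot\tilde v\,dx$ as well. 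Combining the two contributions yields
\[
\langle d\rvert_{\varepsilon=\tilde\varepsilon}S_\varepsilon[\eta][\tilde u],\tilde v\rangle_{\tilde\varepsilon}=\Big(-(\tilde\lambda+1)^{-2}+(\tilde\lambda+1)^{-1}\Big)\int_\Omega\eta\tilde u\cdot\tilde v\,dx=\tilde\lambda(\tilde\lambda+1)^{-2}\int_\Omega\eta\tilde u\cdot\tilde v\,dx,
\]
which is \eqref{der:oper}. I expect the main obstacle to be not the algebra but the careful bookkeeping that legitimizes the differentiation: one must argue that the $\varepsilon$-dependence of the underlying Hilbert spaces can genuinely be removed (so that $T_\varepsilon,J_\varepsilon$ are analytic maps into fixed operator spaces and the derivative-of-the-inverse rule applies), and one must keep precise track of which objects live in $\xn^{\tilde\varepsilon}(\Omega)$ versus its dual. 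The substantive analytic point, however, is the observation that the divergence-free constraint on Maxwell eigenvectors annihilates the penalty terms; without it the derivative would retain $\tau$-dependent contributions, contradicting the $\tau$-independence of the Maxwell spectrum.
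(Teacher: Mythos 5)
Your proposal is correct and follows essentially the same route as the paper's proof: differentiating the factorization $S_\varepsilon=\iota_\varepsilon\circ T_\varepsilon^{-1}\circ J_\varepsilon$ via the product rule and the derivative-of-the-inverse formula, using the eigenvector relations $T_{\tilde\varepsilon}^{-1}J_{\tilde\varepsilon}\tilde u=(\tilde\lambda+1)^{-1}\tilde u$ (and likewise for $\tilde v$) together with the symmetry of $T_{\tilde\varepsilon}$ and $J_{\tilde\varepsilon}$ to reduce everything to $(\tilde\lambda+1)^{-1}\bigl(d\rvert_{\tilde\varepsilon}J_\varepsilon[\eta][\tilde u][\tilde v]-(\tilde\lambda+1)^{-1}d\rvert_{\tilde\varepsilon}T_\varepsilon[\eta][\tilde u][\tilde v]\bigr)$, and finally killing the penalty contributions through $\operatorname{div}(\tilde\varepsilon\tilde u)=\operatorname{div}(\tilde\varepsilon\tilde v)=0$. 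Your auxiliary identity $J_{\tilde\varepsilon}[T_{\tilde\varepsilon}^{-1}\phi][\tilde v]=(\tilde\lambda+1)^{-1}\phi[\tilde v]$ is just a cleaner packaging of the same symmetry manipulations the paper carries out inline.
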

\begin{proof}
Under our assumptions on $\Omega$, the space $\xn^\varepsilon(\Omega)$ coincides with the space $H^1_{\rm \scriptscriptstyle N}(\Omega)$ introduced in \eqref{dfn:H1N}, and their norm are equivalent. Then, it is easily seen that   the compact self-adjoint operator $S_\varepsilon$ in $L^2(\Omega)$ 
is obtained by compositions and inversions of real-analytic maps in $\varepsilon$ (such as linear and multilinear
continuous maps). As a consequence $S_\varepsilon$  depends real analytically upon $\varepsilon$.

Now let  $\eta \in W^{1,\infty} \left(\Omega\right) \cap  \mathrm{Sym}_3 (\Omega)$.
Since $J_{\tilde \varepsilon}[\tilde{u}] = (\tilde{\lambda} +1)^{-1} T_{\tilde{\varepsilon}}[\tilde{u}]$, $J_\varepsilon[\tilde{v}] = (\tilde{\lambda} +1)^{-1} T_{\tilde{\varepsilon}}[\tilde{v}]$, and $S_{\tilde{\varepsilon}}$ is symmetric, we have that 
\begin{equation} \label{diff:JeB}
\begin{split}
&\langle d\rvert_{\varepsilon = \tilde{\varepsilon}} S_\varepsilon [\eta][\tilde{u}], \tilde{v}\rangle_{\tilde \varepsilon} \\
&\quad = \langle \iota_\varepsilon \circ T_{\tilde{\varepsilon}}^{-1} \circ d\rvert_{\varepsilon = \tilde{\varepsilon}} J_\varepsilon [\eta] [\tilde{u}], \tilde{v} \rangle_{\tilde \varepsilon} + \langle \iota_\varepsilon \circ d\rvert_{\varepsilon = \tilde{\varepsilon}} T_{\varepsilon}^{-1} [\eta] \circ J_{\tilde \varepsilon} [\tilde{u}], \tilde{v}\rangle_{\tilde \varepsilon}\\
&\quad = J_{\tilde{\varepsilon}} [\tilde{v}] \left[ \iota_\varepsilon \circ T_{\tilde{\varepsilon}}^{-1} \circ d\rvert_{\varepsilon = \tilde{\varepsilon}} J_\varepsilon[\eta][\tilde{u}] \right] + J_{\tilde{\varepsilon}} [\tilde{v}]\left[ \iota_\varepsilon \circ d\rvert_{\varepsilon = \tilde{\varepsilon}} T_\varepsilon^{-1} [\eta] \circ J_{\tilde{\varepsilon}} [\tilde{u}] \right]\\
&\quad = (\tilde{\lambda} +1)^{-1} T_{\tilde{\varepsilon}}[\tilde{v}] \left[ T_{\tilde{\varepsilon}}^{-1} \circ d\rvert_{\varepsilon = \tilde{\varepsilon}} J_\varepsilon[\eta][\tilde{u}] - T_{\tilde{\varepsilon}}^{-1} \circ d\rvert_{\varepsilon = \tilde{\varepsilon}} T_\varepsilon [\eta] \circ T_{\tilde{\varepsilon}}^{-1} \circ J_{\tilde{\varepsilon}}[\tilde{u}] \right]\\
&\quad = (\tilde{\lambda} +1)^{-1} T_{\tilde{\varepsilon}} \left[ T_{\tilde{\varepsilon}}^{-1} \circ d\rvert_{\varepsilon = \tilde{\varepsilon}} J_\varepsilon[\eta][\tilde{u}] - T_{\tilde{\varepsilon}}^{-1} \circ d\rvert_{\varepsilon = \tilde{\varepsilon}} T_\varepsilon [\eta] \circ T_{\tilde{\varepsilon}}^{-1} \circ (\tilde{\lambda} +1)^{-1} T_{\tilde{\varepsilon}} [\tilde{u}] \right] [\tilde{v}]\\
&\quad = (\tilde{\lambda} +1)^{-1}  \left( d\rvert_{\varepsilon = \tilde{\varepsilon}} J_\varepsilon[\eta][\tilde{u}][\tilde{v}] - (\tilde{\lambda} +1)^{-1} d\rvert_{\varepsilon = \tilde{\varepsilon}} T_\varepsilon [\eta][\tilde{u}][\tilde{v}] \right).
\end{split}
\end{equation}
Moreover, by standard calculus,
\begin{equation} \label{der:J}
d\rvert_{\varepsilon = \tilde{\varepsilon}} J_\varepsilon[\eta][\tilde{u}][\tilde{v}] = \int_\Omega \eta \tilde{u} \cdot \tilde{v} \, dx
\end{equation}
and
\begin{equation} \label{der:B}
\begin{split}
d\rvert_{\varepsilon = \tilde{\varepsilon}} T_\varepsilon [\eta][\tilde{u}][\tilde{v}] = \int_\Omega \eta \tilde{u} \cdot \tilde{v} \, dx + \int_\Omega \left( \operatorname{div}(\tilde{\varepsilon}\tilde{u}) \, \operatorname{div}(\eta \tilde{v}) + \operatorname{div}(\eta \tilde{u}) \, \operatorname{div}(\tilde{\varepsilon} \tilde{v}) \right) \, dx.
\end{split}
\end{equation}
Since $\operatorname{div}(\tilde{\varepsilon}\tilde{u})=0=\operatorname{div}(\tilde{\varepsilon}\tilde{v})$ in $\Omega$, using \eqref{diff:JeB}, \eqref{der:J} and \eqref{der:B}, we get \eqref{der:oper}.
\end{proof}

Following \cite{LaLa04}, given a finite set of indices $F \subset \mathbb{N}$, we consider those  permittivities $\varepsilon \in \mathcal{E}$ for which Maxwell eigenvalues with indices in $F$ do not coincide with Maxwell eigenvalues with indices outside $F$. We then introduce the following sets:
\[
\mathcal{E}[F] := \set{\varepsilon \in \mathcal{E} : \lambda_j[\varepsilon] \neq \lambda_l[\varepsilon] \ \forall j \in F, l \in \mathbb{N}\setminus F}
\]
and 
\[
\Theta[F] := \set{\varepsilon \in \mathcal{E}[F] : \lambda_j[\varepsilon] \text{ have a common value } \lambda_F[\varepsilon] \text{ for all }  j \in F}.
\]
Let $\varepsilon \in \mathcal{E}[F]$. The elementary symmetric function of degree $s \in \{1,\dots, \abs{F}\}$ of the Maxwell eigenvalues with indices in $F$ is defined by
\[
\Lambda_{F,s}[\varepsilon] := \sum_{\substack{j_1,\dots,j_s \in F \\ j_1<\dots<j_s}} \lambda_{j_1}[\varepsilon] \cdots \lambda_{j_s}[\varepsilon].
\]
In the following theorem we show that the maps $\varepsilon \mapsto \Lambda_{F,s}[\varepsilon]$ are real analytical on $\mathcal{E}[F]$ and we compute their Fr\'echet derivatives with respect to $\varepsilon$.
\begin{theorem}\label{thm:diffeps}
Let $\Omega$ be as in \eqref{Omega_def}. Let $F$ be a finite subset of $\mathbb{N}$ and 
$s \in \{1,\ldots,|F|\}$. Then $\mathcal{E}[F]$ is  open  in $W^{1,\infty} \left(\Omega\right) \cap  \mathrm{Sym}_3 (\Omega)$ and the elementary symmetric function $\Lambda_{F,s}$ depend real analytically upon $\varepsilon \in \mathcal{E}[F]$.

Moreover, if $\{F_1,\ldots, F_n\}$ is a partition 
of $F$ and $\tilde \varepsilon \in \bigcap_{k=1}^n \Theta[F]$ is 
such that for each $k =1,\ldots, n$ the Maxwell eigenvalues $\lambda_j[\tilde \varepsilon]$ assume the common 
value $\lambda_{F_k}[\tilde \varepsilon]$ for all $j \in F_k$, then the differential of the function 
$\Lambda_{F,s}$ at the point $\tilde \varepsilon$ is given by the formula
\begin{equation}\label{diff:Lambdas}
d\rvert_{\varepsilon=\tilde{\varepsilon}} \Lambda_{F,s} [\eta] = -\sum_{k=1}^nc_k  \sum_{l \in F_k} \int_\Omega \eta \tilde{E}^{(l)} \cdot  \tilde{E}^{(l)} \, dx,
\end{equation} 
for all $\eta \in W^{1,\infty} \left(\Omega\right) \cap  \mathrm{Sym}_3 (\Omega)$, where
\begin{equation*}
c_k :=  \sum_{\substack{0 \leq s_1 \leq |F_1| \\ \ldots \\ 0 \leq s_n \leq |F_n| \\s_1+\ldots +s_n =s}}
\binom{\, \abs{F_k}-1}{s_k-1} (\lambda_{F_k}[\tilde{\varepsilon}])^{s_k} \prod_{\substack{j=1\\j \neq k}}^n
\binom{\, \abs{F_j}}{s_j}(\lambda_{F_j}[\tilde \varepsilon])^{s_j},
\end{equation*}
and for each $k=1, \dots, n$, $\{\tilde E^{(l)}\}_{l \in F_k}$ is an orthonormal basis in $L_{\tilde \varepsilon}^2(\Omega)$ of Maxwell eigenvectors for the eigenspace associated with $\lambda_{F_k}[\tilde \varepsilon]$.
\end{theorem}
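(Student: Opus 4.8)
The plan is to follow the strategy of Lamberti and Lanza de Cristoforis \cite{LaLa04}, carrying everything over to the compact self-adjoint operator $S_\varepsilon$ of \eqref{operator:dfn}, whose positive eigenvalues are the reciprocals $\mu_j[\varepsilon]=(\sigma_j[\varepsilon]+1)^{-1}$, and exploiting the fact, already observed in the proof of Lemma \ref{lem:der}, that $\varepsilon\mapsto S_\varepsilon$ is real analytic. Openness of $\mathcal{E}[F]$ would come from the continuity of the Maxwell eigenvalues: fixing $\varepsilon_0\in\mathcal{E}[F]$, only finitely many indices $l\notin F$ can have $\lambda_l[\varepsilon_0]$ near the bounded set $\set{\lambda_j[\varepsilon_0]:j\in F}$, and for these the strict inequalities persist under a small $W^{1,\infty}$ perturbation by Theorem \ref{thm:loclip}, while for the remaining large indices a crude min-max comparison of Rayleigh quotients keeps $\lambda_l[\varepsilon]$ uniformly above $\max_{j\in F}\lambda_j[\varepsilon]$; hence a whole $W^{1,\infty}$-ball around $\varepsilon_0$ is contained in $\mathcal{E}[F]$.

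For analyticity I would argue locally near a fixed $\tilde\varepsilon\in\mathcal{E}[F]$. The eigenvalues $\set{\lambda_j:j\in F}$ are separated from the rest of the Maxwell spectrum, and choosing the penalty parameter $\tau$ suitably (legitimate, since $\Lambda_{F,s}$ does not depend on $\tau$) I can also keep them away from the gradient part $\set{\tau\rho_k}$ of the spectrum described in Theorem \ref{thm:codau}. Then a fixed closed contour in $\mathbb{C}$ encloses exactly $\set{\mu_j=(\lambda_j+1)^{-1}:j\in F}$ and no other point of the spectrum of $S_\varepsilon$ for $\varepsilon$ near $\tilde\varepsilon$, so the associated Riesz projection $P_F[\varepsilon]$ depends analytically on $\varepsilon$. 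Its range is the direct sum of the Maxwell eigenspaces with indices in $F$, of constant dimension $\abs{F}$, and the compression of $S_\varepsilon$ to this range is a finite-rank analytic self-adjoint operator with eigenvalues $\mu_j$, $j\in F$. The elementary symmetric functions of the $\mu_j$ are the coefficients of its characteristic polynomial, hence analytic; since $\mu_j$ stays bounded away from $0$ and $\lambda_j=\mu_j^{-1}-1$, the functions $\Lambda_{F,s}$ are analytic compositions of these.

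For the derivative I would use the multiplicative structure across the partition. Writing $\Lambda_{F,s}=\sum_{s_1+\cdots+s_n=s}\prod_{k=1}^n\Lambda_{F_k,s_k}$, where each factor is analytic by the first part applied to the isolated group $F_k$, the product rule at $\tilde\varepsilon$ reduces the computation to the differentials $d\rvert_{\varepsilon=\tilde\varepsilon}\Lambda_{F_k,s_k}$ and the values $\Lambda_{F_j,s_j}[\tilde\varepsilon]=\binom{\abs{F_j}}{s_j}(\lambda_{F_j})^{s_j}$. The key point is that, all eigenvalues inside $F_k$ coinciding at $\tilde\varepsilon$, the first-order variation of every symmetric function of that group depends linearly only on $\sum_{l\in F_k}\langle d\rvert_{\varepsilon=\tilde\varepsilon}S_\varepsilon[\eta]\tilde E^{(l)},\tilde E^{(l)}\rangle_{\tilde\varepsilon}$, whence $d\rvert_{\varepsilon=\tilde\varepsilon}\Lambda_{F_k,s_k}=\binom{\abs{F_k}-1}{s_k-1}(\lambda_{F_k})^{s_k-1}\,d\rvert_{\varepsilon=\tilde\varepsilon}\Lambda_{F_k,1}$. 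Computing $d\rvert_{\varepsilon=\tilde\varepsilon}\Lambda_{F_k,1}$ from the change of variables $\mu\mapsto\mu^{-1}-1$ together with \eqref{der:oper} of Lemma \ref{lem:der} gives $d\rvert_{\varepsilon=\tilde\varepsilon}\Lambda_{F_k,1}[\eta]=-\lambda_{F_k}\sum_{l\in F_k}\int_\Omega\eta\tilde E^{(l)}\cdot\tilde E^{(l)}\,dx$, and collecting the combinatorial factors $\prod_{j\neq k}\binom{\abs{F_j}}{s_j}(\lambda_{F_j})^{s_j}$ over all admissible $(s_1,\dots,s_n)$ produces exactly the coefficients $c_k$ and formula \eqref{diff:Lambdas}.

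The genuinely delicate step is the differentiation just described: individual Maxwell eigenvalues inside a group need not be differentiable where they cross, so the derivative has to be extracted from the analytic symmetric functions alone. This is resolved by the trace argument, since to first order $\Lambda_{F_k,s_k}$ sees only the trace of the perturbation on the eigenspace, which is precisely the quantity made explicit by Lemma \ref{lem:der}. A secondary technical issue, peculiar to the Maxwell problem, is isolating the Maxwell part of the spectrum of $S_\varepsilon$ from the spurious gradient eigenvalues $\tau\rho_k$; I would dispose of it through the local freedom in the choice of $\tau$, which is harmless exactly because $\Lambda_{F,s}$ is $\tau$-independent.
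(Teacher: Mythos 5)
Your proposal is correct, and its skeleton coincides with the paper's proof: choose the penalty parameter $\tau$ large so that the Maxwell eigenvalues indexed by $F$ lie strictly below the gradient (non-Maxwell) part of the penalized spectrum, pass to the analytic family of compact self-adjoint operators $S_\varepsilon$ with eigenvalues $\mu_j=(\lambda_j+1)^{-1}$, extract the first-order trace term through Lemma \ref{lem:der}, and reduce a general partition to the case $n=1$ via $\Lambda_{F,s}=\sum_{s_1+\cdots+s_n=s}\prod_{k=1}^n\Lambda_{F_k,s_k}$ and the product rule.

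Where you genuinely differ is the middle layer. The paper obtains openness of $\mathcal{E}[F]$, analyticity of the symmetric functions $M_{F,s}$ of the $\mu_j$, and the formula expressing $d\rvert_{\varepsilon=\tilde\varepsilon}M_{F,s}[\eta]$ as a multiple of $\sum_{l}\langle d\rvert_{\varepsilon=\tilde\varepsilon}S_\varepsilon[\eta][\tilde E^{(l)}],\tilde E^{(l)}\rangle_{\tilde\varepsilon}$ in one stroke by citing the abstract result \cite[Thm. 2.30]{LaLa04}, and then converts to the $\lambda_j$'s through $\hat\Lambda_{F,s}=M_{F,\,|F|-s}/M_{F,\,|F|}$ and the binomial identity \eqref{Lambdahat:Lambda}. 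You instead re-derive that package: openness from the Lipschitz continuity of Theorem \ref{thm:loclip} plus a min-max bound on the tail of the spectrum (more elementary than the paper's route), analyticity from a Riesz projection and the characteristic polynomial of the compressed finite-rank operator, and the derivative from the observation that, when the group's eigenvalues coincide at $\tilde\varepsilon$, every differential $d\rvert_{\varepsilon=\tilde\varepsilon}M_{F_k,r}$ --- hence, by the chain rule, every $d\rvert_{\varepsilon=\tilde\varepsilon}\Lambda_{F_k,s_k}$ --- is a scalar multiple of one and the same trace functional. This legitimizes your proportionality $d\rvert_{\varepsilon=\tilde\varepsilon}\Lambda_{F_k,s_k}=\binom{|F_k|-1}{s_k-1}\lambda_{F_k}^{s_k-1}\,d\rvert_{\varepsilon=\tilde\varepsilon}\Lambda_{F_k,1}$, and computing $d\rvert_{\varepsilon=\tilde\varepsilon}\Lambda_{F_k,1}$ from \eqref{der:oper} then yields exactly the coefficients $c_k$, recovering \eqref{diff:Lambdas}; the paper reaches the same point by the quotient rule on $\hat\Lambda_{F,s}$ followed by a binomial-sum manipulation. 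What each buys: the citation is shorter and is the intended use of the abstract machinery; your version is self-contained, makes the mechanism visible (constancy of $\operatorname{tr}P_F[\varepsilon]$, vanishing of $\operatorname{tr}(S_\varepsilon\,dP_F)$ on an eigenspace), and the route through $d\Lambda_{F_k,1}$ is algebraically cleaner. The one step to phrase with care in a final write-up is precisely that proportionality: it must be argued through the analytic functions $M_{F_k,r}$ (equivalently, the characteristic polynomial coefficients), never through derivatives of individual eigenvalues, which --- as you correctly flag --- need not exist at a crossing.
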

\begin{proof}
Let $\tilde \varepsilon \in \mathcal{E}$. As we have already pointed out, Maxwell eigenvalues are independent on the choice of the parameter $\tau>0$ in \eqref{prob:eigen2weak}.
Thus, to avoid  problems of different enumeration  between Maxwell eigenvalues and the eigenvalues of $S_\varepsilon$, we can fix $\tau$ big enough such that all the Maxwell eigenvalues  $\{\lambda_j[\tilde \varepsilon]\}_{j \in F}$ are strictly smaller than  any other eigenvalue of \eqref{prob:eigen2weak} which is not a Maxwell eigenvalue (i.e. an eigenvalue belonging to the family ii) 
in Theorem \ref{thm:codau}). In this way $\sigma_j[\tilde \varepsilon] =\lambda_j[\tilde \varepsilon]$ for all $j \in F$.

The eigenvalues $\mu_j$ of the operator $S_\varepsilon$ and the eigenvalues $\sigma_j$ of \eqref{prob:eigen2weak} satisfy $\mu_j= ({\sigma_j} +1)^{-1}$.   Then the sets $\mathcal{E}[F]$ and $\set{\varepsilon \in \mathcal{E} : \mu_j[\varepsilon] \neq \mu_l[\varepsilon] \,\,\, \forall j \in F, \, l \in \mathbb{N} \setminus F}$ coincide locally around $\tilde \varepsilon$. 
By Lemma \ref{lem:Te}, $S_\varepsilon$ is a compact self-adjoint operator acting on $L^2_\varepsilon(\Omega)$.
Furthermore, as already pointed out in the proof of Lemma \ref{lem:der}, $S_\varepsilon$ depends real analytically on $\varepsilon$. In the same way one shows that also the scalar product $\langle \cdot, \cdot \rangle_\varepsilon$ on $L^2(\Omega)^3$  depends real analytically on $\varepsilon$. 
Therefore, by the abstract result of Lamberti and Lanza de Cristoforis  \cite[Thm. 2.30]{LaLa04}, we have that the set $\set{\varepsilon \in \mathcal{E} : \mu_j[\varepsilon] \neq \mu_l[\varepsilon] \,\,\, \forall j \in F, l \in \mathbb{N} \setminus F}$ is open in $W^{1,\infty} \left(\Omega\right) \cap  \mathrm{Sym}_3 (\Omega)$ and that the function
\[
M_{F,s}[\varepsilon] := \sum_{\substack{j_1,\dots,j_s \in F \\ j_1<\dots<j_s}} \mu_{j_1}[\varepsilon] \cdots \mu_{j_s}[\varepsilon]
\]
depend real analytically on $\varepsilon \in \mathcal{E}[F]$. From this, to infer the real analyticity of the functions $\Lambda_{F,s}$ on $\varepsilon \in \mathcal{E}[F]$, one can just observe that if we denote 
\[
\hat{\Lambda}_{F,s}[\varepsilon] := \sum_{\substack{j_1,\dots,j_s \in F \\ j_1<\dots<j_s}} (\lambda_{j_1}[\varepsilon] +1) \cdots (\lambda_{j_s}[\varepsilon]+1),  
\]
then  we have 
\[
\hat{\Lambda}_{F,s} [\varepsilon] = \frac{M_{F,\, |F|-s}[\varepsilon]}{M_{F,\, |F|}[\varepsilon]}
\]
and  by elementary combinatorics
\begin{equation} \label{Lambdahat:Lambda}
\Lambda_{F,s}[\varepsilon] = \sum_{k=0}^s (-1)^{s-k} \binom{\, \abs{F}-k}{s-k} \hat{\Lambda}_{F,k} [\varepsilon],
\end{equation}
where we have set $\hat{\Lambda}_{F,0}=1$. Then we can deduce that locally around $\tilde \varepsilon$ 
the maps $\Lambda_{F,s}[\varepsilon]$ are real analytic and accordingly the analyticity part of the statement follows since
$\tilde \varepsilon$ is arbitrary.

Next, we turn to prove formula \eqref{diff:Lambdas}. We start by the case $n=1$, that is $F_1 =F$ and 
$\tilde \varepsilon \in \Theta[F]$. Let $\eta \in W^{1,\infty} \left(\Omega\right) \cap  \mathrm{Sym}_3 (\Omega)$.
By  \cite[Thm. 2.30]{LaLa04} we get that
\begin{equation*}
        d\rvert_{\varepsilon=\tilde{\varepsilon}} M_{F,s} [{\eta}] = \binom{\, \abs{F}-1}{s-1} (\lambda_F [\tilde{\varepsilon}]+1)^{1-s} \sum_{l \in F} \langle d\rvert_{\varepsilon=\tilde{\varepsilon}} \, S_{\varepsilon} [\eta][\tilde{E}^{(l)}], \tilde{E}^{(l)} \rangle_{\tilde{\varepsilon}}.
\end{equation*}
Moreover, by using formula \eqref{der:oper} of Lemma \ref{lem:der}, we have that
\begin{equation*}
\begin{split}
& d\rvert_{\varepsilon = \tilde{\varepsilon}} \hat{\Lambda}_{F,s} [\eta] \\
&  =\left( d\rvert_{\varepsilon=\tilde{\varepsilon}} M_{F, \, \abs{F}-s} [\eta] M_{F, \, \abs{F}} \, [\tilde{\varepsilon}] - M_{F, \, \abs{F}-s} \, [\tilde{\varepsilon}] \, d\rvert_{\varepsilon=\tilde{\varepsilon}} M_{F, \, \abs{F}} \,  [\eta] \right) (\lambda_F [\tilde{\varepsilon}]+1)^{2 \, \abs{F}}  \\
& = \left( \binom{\, \abs{F}-1}{\, \abs{F}-s-1} (\lambda_F [\tilde{\varepsilon}] +1)^{s+1-2 \, \abs{F}}- \binom{\, \abs{F}}{s} \binom{\, \abs{F}-1}{\, \abs{F}-1} (\lambda_F [\tilde{\varepsilon}]+1)^{s+1 -2 \, \abs{F}} \right) \\
& \qquad \cdot (\lambda_F [\tilde{\varepsilon}]+1)^{2 \, \abs{F}} \sum_{l \in F} \langle d\rvert_{\varepsilon=\tilde{\varepsilon}} \, S_{\varepsilon} [\eta][\tilde{E}^{(l)}], \tilde{E}^{(l)} \rangle_{\tilde{\varepsilon}}\\
&   = -  \lambda_F [\tilde{\varepsilon}] (\lambda_F [\tilde{\varepsilon}]+1)^{s-1} \binom{\, \abs{F}-1}{s-1} \sum_{l \in F} \int_\Omega \eta \tilde{E}^{(l)} \cdot \tilde{E}^{(l)} \, dx.
\end{split}
\end{equation*}
Finally, recalling \eqref{Lambdahat:Lambda}, we get
\begin{equation*}
\begin{split}
&d\rvert_{\varepsilon = \tilde{\varepsilon}} \Lambda_{F,s} [\eta]\\
&   = -  \lambda_F [\tilde{\varepsilon}] \sum_{k=1}^s (-1)^{s-k}  (\lambda_F [\tilde{\varepsilon}]+1)^{k-1} \binom{\, \abs{F} -k}{s-k} \binom{\, \abs{F}-1}{k-1} \sum_{l \in F} \int_\Omega \eta \tilde{E}^{(l)} \cdot \tilde{E}^{(l)} \, dx\\
&  = -  \binom{\, \abs{F}-1}{s-1} \lambda_F [\tilde{\varepsilon}] \sum_{k=0}^{s-1} \binom{s-1}{k}  (\lambda_F [\tilde{\varepsilon}]+1)^{k} (-1)^{s-k-1} \sum_{l \in F} \int_\Omega \eta \tilde{E}^{(l)} \cdot \tilde{E}^{(l)} \, dx\\
&= -  \binom{\, \abs{F}-1}{s-1} (\lambda_F [\tilde{\varepsilon}])^s \sum_{l \in F} \int_\Omega \eta \tilde{E}^{(l)} \cdot \tilde{E}^{(l)} \, dx.
\end{split}
\end{equation*}
Next we consider the case $n \geq 2$. By means of a continuity argument, one can easily see
that there exists an open neighborhood $\mathcal{W}$ of  $\tilde \varepsilon$ in $\mathcal{E}[F]$ such that $\mathcal{W} \subseteq \bigcap_{k=1}^n\mathcal{E}[F_k]$. Thus
\[
\Lambda_{F,s}[\varepsilon] = \sum_{\substack{0 \leq s_1 \leq |F_1| \\ \ldots \\ 0 \leq s_n \leq |F_n| \\s_1+\ldots +s_n =s}}
\prod_{k=1}^n\Lambda_{F_k,s_k}[\varepsilon] \qquad \forall \varepsilon \in \mathcal{W}.
\]
Differentiating the above equality at the point $\tilde{\varepsilon}$ and using formula \eqref{diff:Lambdas} with $n=1$ to each function $\Lambda_{F_k,s_k}$, one can see that formula \eqref{diff:Lambdas} holds true for any $n \in \mathbb{N}$.
\end{proof}

We conclude this section by studying the case of  one-parametric families of permittivities.
Using \Cref{lem:der} and classical analytic perturbation theory we can recover a Rellich-Nagy-type theorem which allows us to describe all the eigenvalues splitting from a multiple eigenvalue of multiplicity $m$ by means of $m$ real-analytic functions.
For classical results in analytic perturbation theory we refer to the seminal works of Rellich \cite{Re37} and Nagy \cite{Na48}. More up do date formulations  can be found in Chow and Hale \cite[Theorem 5.2, p. 487]{ChHa82}, Kato \cite[Theorem 3.9, p. 393]{Ka95},
Lamberti and Lanza de Cristoforis \cite[Theorem 2.27]{LaLa04}.

\begin{theorem} \label{thm:RN}
Let $\Omega$ be as in \eqref{Omega_def}.
Let $\tilde \varepsilon \in \mathcal{E}$ and let $\{\varepsilon_t\}_{t \in \mathbb{R}} \subseteq  \mathcal{E}$ be a family depending real analytically 
on $t$ and such that $\varepsilon_0 = \tilde \varepsilon$. Let $\tilde \lambda$ be a Maxwell eigenvalue of multiplicity $m \in \mathbb{N}$ and $\tilde{E}^{(1)},\dots, \tilde{E}^{(m)}$  a corresponding orthonormal basis of Maxwell eigenvectors in $L^2_{\tilde \varepsilon}(\Omega)$  with $\varepsilon = \tilde \varepsilon$. 
Let $\tilde \lambda = \lambda_n[\tilde \varepsilon] = \cdots = \lambda_{n+m-1}[\tilde \varepsilon]$ for some $n \in \mathbb{N}$.
Then there exist an open interval $I \subseteq \mathbb{R}$ containing zero and $m$ real analytic functions $g_1,\ldots, g_m$ from 
$I$ to $\mathbb{R}$ such that 
\[
\{\lambda_n[\varepsilon_t] , \ldots , \lambda_{n+m-1}[ \varepsilon_t]\} = \{g_1(t),\ldots, g_m(t)\} \qquad \forall t \in I.
\]
Moreover, the derivatives $g'_1(0), \ldots, g'_m(0)$ of the functions $g_1,\ldots, g_m$ at zero coincide with the eigenvalues of the matrix
\begin{equation*}
\left( -\tilde \lambda \int_\Omega \dot \varepsilon_0 \, \tilde{E}^{(i)} \cdot \tilde{E}^{(j)} \, dx \right)_{i,j=1,\ldots,m},
\end{equation*}
where $\dot \varepsilon_0$ denotes the derivative at $t=0$ of the map $t \mapsto \varepsilon_t$.
\end{theorem}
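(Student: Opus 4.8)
The plan is to transport the statement to the real-analytic family of compact self-adjoint operators $t\mapsto S_{\varepsilon_t}$ and then invoke the classical analytic perturbation theory in the form of Lamberti and Lanza de Cristoforis \cite[Theorem 2.27]{LaLa04}, the first-order information being supplied by Lemma \ref{lem:der}. First I would fix the penalty parameter $\tau$ conveniently. Since the Maxwell spectrum is independent of $\tau$, whereas the eigenvalues of the gradient family ii) in Theorem \ref{thm:codau} are of the form $\tau\rho$ with $\rho$ a Dirichlet eigenvalue of $-\operatorname{div}(\tilde\varepsilon\nabla\cdot)$, I can choose $\tau$ so large that $\tilde\lambda$, regarded as an eigenvalue $\sigma$ of problem \eqref{prob:eigen2weak} at $\tilde\varepsilon$, has multiplicity exactly $m$, its eigenspace being spanned precisely by the Maxwell eigenvectors $\tilde E^{(1)},\dots,\tilde E^{(m)}$ and strictly separated from every non-Maxwell eigenvalue. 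By the continuity established in Theorem \ref{thm:loclip} (together with the analogous continuity of the gradient-family eigenvalues) this separation persists for $t$ in a suitable open interval $I\ni 0$; hence, for $t\in I$, the $m$ consecutive Maxwell eigenvalues $\lambda_n[\varepsilon_t],\dots,\lambda_{n+m-1}[\varepsilon_t]$ are exactly the eigenvalues of \eqref{prob:eigen2weak} branching from $\tilde\lambda$, and by Lemma \ref{lem:Te} they correspond to the eigenvalues of $S_{\varepsilon_t}$ branching from $\tilde\mu:=(\tilde\lambda+1)^{-1}$.

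Next I would apply the abstract theorem. As noted in the proof of Lemma \ref{lem:der}, both the operator $S_\varepsilon$ and the scalar product $\langle\cdot,\cdot\rangle_\varepsilon$ depend real analytically on $\varepsilon$; composing with the real-analytic map $t\mapsto\varepsilon_t$ produces a real-analytic family $t\mapsto S_{\varepsilon_t}$ of compact self-adjoint operators with a real-analytically varying inner product, which is precisely the framework of \cite[Theorem 2.27]{LaLa04}. That theorem yields (after possibly shrinking $I$) $m$ real-analytic functions $\mu_1,\dots,\mu_m\colon I\to\mathbb{R}$ with $\mu_i(0)=\tilde\mu$ whose values describe the branching eigenvalues of $S_{\varepsilon_t}$, and it identifies the derivatives $\mu_1'(0),\dots,\mu_m'(0)$ with the eigenvalues of the $m\times m$ matrix
\[
\left( \langle d\rvert_{\varepsilon=\tilde\varepsilon} S_\varepsilon[\dot\varepsilon_0]\,\tilde E^{(i)},\, \tilde E^{(j)} \rangle_{\tilde\varepsilon} \right)_{i,j=1,\dots,m},
\]
where $\tfrac{d}{dt}\big|_{0}S_{\varepsilon_t}=d\rvert_{\varepsilon=\tilde\varepsilon}S_\varepsilon[\dot\varepsilon_0]$ by the chain rule. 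By Lemma \ref{lem:der} this matrix equals $\tilde\lambda(\tilde\lambda+1)^{-2}\big(\int_\Omega\dot\varepsilon_0\,\tilde E^{(i)}\cdot\tilde E^{(j)}\,dx\big)_{i,j}$.

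Finally, I would pull the functions back to the Maxwell eigenvalues by setting $g_i(t):=\mu_i(t)^{-1}-1$. Since $\mu_i(0)=\tilde\mu>0$, each $g_i$ is real analytic near $0$, and by the reduction of the first step one has $\{g_1(t),\dots,g_m(t)\}=\{\lambda_n[\varepsilon_t],\dots,\lambda_{n+m-1}[\varepsilon_t]\}$ for $t\in I$. Differentiating gives $g_i'(0)=-\mu_i(0)^{-2}\mu_i'(0)=-(\tilde\lambda+1)^2\mu_i'(0)$, so the multiset $\{g_i'(0)\}$ is the set of eigenvalues of $-(\tilde\lambda+1)^2$ times the matrix above, that is of $\big(-\tilde\lambda\int_\Omega\dot\varepsilon_0\,\tilde E^{(i)}\cdot\tilde E^{(j)}\,dx\big)_{i,j}$, which is the asserted formula.

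The hard part will be the bookkeeping in the first step: one must guarantee that throughout $I$ the branches remain genuinely Maxwell rather than mixing with the gradient family of Theorem \ref{thm:codau}, and that the eigenvalue of $S_{\tilde\varepsilon}$ at $\tilde\mu$ has multiplicity exactly $m$. This is precisely what the $\tau$-independence of the Maxwell spectrum, combined with the continuity of Theorem \ref{thm:loclip}, secures. Once this separation is in place, the remaining steps reduce to a direct application of \cite[Theorem 2.27]{LaLa04} and the elementary chain-rule computation, the essential algebraic input being the derivative formula of Lemma \ref{lem:der}.
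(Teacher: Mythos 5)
Your proposal is correct and follows essentially the same route as the paper's proof: fix $\tau$ large to separate the Maxwell eigenvalue $\tilde\lambda$ from the gradient-family eigenvalues of Theorem \ref{thm:codau}, apply \cite[Thm. 2.27, Cor. 2.28]{LaLa04} to the real-analytic family $S_{\varepsilon_t}$, pull back via $g_i(t)=h_i(t)^{-1}-1$, and use Lemma \ref{lem:der} together with the factor $-(\tilde\lambda+1)^2$ from the chain rule to obtain the derivative matrix. The only difference is cosmetic: you spell out the bookkeeping (persistence of the Maxwell/non-Maxwell separation and the multiplicity count for $S_{\tilde\varepsilon}$) that the paper compresses into its opening sentence about choosing $\tau$ big enough.
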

\begin{proof}
Again, we can assume that $\tau$ is big enough such that $\tilde \lambda$ is strictly smaller than any eigenvalue of \eqref{prob:eigen2weak}  which is not a Maxwell eigenvalue.
By applying \cite[Thm. 2.27, Cor. 2.28]{LaLa04} to the operator $S_\varepsilon$ defined in \eqref{operator:dfn} we get that there exist an open interval $I$ of $\mathbb{R}$ containing zero and $m$ real analytic functions $h_1, \dots, h_m$ from $I$ to $\mathbb{R}$ such that $\{(\lambda_n[\varepsilon_t] +1)^{-1} , \ldots , (\lambda_{n+m-1}[ \varepsilon_t] + 1)^{-1} \} = \{h_1(t),\ldots, h_m(t)\}$ for all $t \in I$. Furthermore, the derivatives at zero of the functions $h_i, i=1, \dots, m$ coincide with the eigenvalues of the matrix
\begin{equation*}
\left( \langle d\rvert_{\varepsilon=\tilde{\varepsilon}} S_\varepsilon [\dot{\varepsilon}_0] \tilde{E}^{(i)}, \tilde{E}^{(j)} \rangle_{\tilde{\varepsilon}} \right)_{i,j=1,\dots,m}.
\end{equation*}
By continuity we have that, eventually further restricting the interval $I$, the functions $h_i$ are away from zero for all $t \in I$. Then, setting 
$$g_i(t):= \frac{1}{h_i(t)} - 1$$
we have that $\{\lambda_n[\varepsilon_t] , \ldots , \lambda_{n+m-1}[ \varepsilon_t]\} = \{g_1(t),\ldots, g_m(t)\}$.
Finally, noticing that 
\begin{equation*}
\frac{d}{dt} g_i(t) \rvert_{t=0} = -(\tilde{\lambda}+1)^2 \frac{d}{dt} h_i(t) \rvert_{t=0},
\end{equation*}
we deduce that the derivatives at zero of the functions $g_i$ coincide with the eigenvalues of the matrix
\begin{equation*}
-(\tilde{\lambda}+1)^2 \left( \langle d\rvert_{\varepsilon=\tilde{\varepsilon}} S_\varepsilon [\dot{\varepsilon}_0] \tilde{E}^{(i)}, \tilde{E}^{(j)} \rangle_{\tilde{\varepsilon}} \right)_{i,j=1,\dots,m} = \left( -\tilde{\lambda} \int_\Omega \dot{\varepsilon}_0 \, \tilde{E}^{(i)} \cdot \tilde{E}^{(j)}\, dx\right)_{i,j=1,\dots,m},
\end{equation*}
where this last equality is justified by \Cref{lem:der}.
\end{proof}

\section{The spectrum is simple for generic permittivities}\label{sec:gen}

The issue of understanding if the eigenvalues of a parameter dependent problem can be made all simple by an 
arbitrarily small perturbation of the parameter is a natural question and has been already investigated by several authors for different problems. For example,  Albert \cite{Al75}  proved the generic simplicity of 
the spectrum of an elliptic operator with respect to the perturbation of the zeroth order term. Moreover, the generic simplicity of the spectrum
has been also considered with respect to the domain perturbation in various papers. We mention, e.g, Micheletti 
\cite{Mi72, Mi73} for the Laplacian and for a general elliptic operator and Ortega and Zuazua \cite{OrZu01} and 
Chitour, Kateb and Long \cite{ChKaLo16} for the Stokes system in dimension two and three, respectively. Finally, we also mention the more recent paper by Dabrowski \cite{Da21} where the author analyze the Laplacian with different boundary conditions and consider also singular perturbations of the domain.

A first step, as we will show in the next proposition, is to prove that  it is always possible to find a small perturbation of the permittivity  that  splits  a non-zero Maxwell eigenvalue of multiplicity $m$ into $m$ simple eigenvalues. 

\begin{proposition} \label{first:step:genericity}
Let $\Omega$ be as in \eqref{Omega_def}.  Let $\tilde \varepsilon \in \mathcal{E}$, $\tilde \lambda \neq 0$ a Maxwell eigenvalue of multiplicity $m \in \mathbb{N}$ and $\tilde{E}^{(1)},\ldots, \tilde{E}^{(m)}$  a corresponding orthonormal basis of Maxwell eigenvectors in $L^2_{\tilde \varepsilon}(\Omega)$ with $\varepsilon = \tilde \varepsilon$. 
Let $\tilde \lambda = \lambda_n[\tilde \varepsilon] = \cdots = \lambda_{n+m-1}[\tilde \varepsilon]$ for some $n \in \mathbb{N}$.
Define 
\[
\tilde \varepsilon_{t,\eta} := \tilde \varepsilon +t \eta \qquad \forall t \in \mathbb{R},
\]
for all $\eta \in W^{1,\infty} \left(\Omega\right) \cap  \mathrm{Sym}_3 (\Omega)$, $\norm{\eta}_{ W^{1,\infty}(\Omega)}\leq 1$.  
Then for all $T>0$ there exist $\eta \in W^{1,\infty} \left(\Omega\right) \cap  \mathrm{Sym}_3 (\Omega)$ with $\norm{\eta}_{ W^{1,\infty}(\Omega)}\leq 1$, and $t\in \mathopen]0,T[$ such that $\tilde \varepsilon_{t,\eta}  \in \mathcal{E}$ and the eigenvalues $\lambda_n[\tilde \varepsilon_{t,\eta}] , \ldots , \lambda_{n+m-1}[\tilde  \varepsilon_{t,\eta}]$ are all simple. 
\end{proposition}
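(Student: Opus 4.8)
The plan is to realize the splitting through a one‑parameter affine perturbation and to read the splitting off from the Rellich--Nagy theorem (Theorem \ref{thm:RN}). Fix $\eta \in W^{1,\infty}(\Omega)\cap\mathrm{Sym}_3(\Omega)$ with $\|\eta\|_{W^{1,\infty}(\Omega)}\le 1$ and set $\varepsilon_t := \tilde\varepsilon_{t,\eta} = \tilde\varepsilon + t\eta$. Since $\mathcal E$ is open and $\tilde\varepsilon\in\mathcal E$, this family is real analytic in $t$ and lies in $\mathcal E$ for $t$ in a neighbourhood of $0$, which is all that the (local) conclusion of Theorem \ref{thm:RN} uses. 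The theorem then provides $m$ real analytic functions $g_1,\dots,g_m$ on an interval $I\ni 0$ with $\{g_1(t),\dots,g_m(t)\}=\{\lambda_n[\varepsilon_t],\dots,\lambda_{n+m-1}[\varepsilon_t]\}$, with $g_i(0)=\tilde\lambda$, and with derivatives $g_1'(0),\dots,g_m'(0)$ equal to the eigenvalues of the symmetric matrix
\[
M_\eta := \Big(-\tilde\lambda\int_\Omega \eta\,\tilde E^{(i)}\cdot\tilde E^{(j)}\,dx\Big)_{i,j=1,\dots,m}.
\]

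The key observation is that it suffices to produce a single admissible $\eta$ for which $M_\eta$ has $m$ pairwise distinct eigenvalues. Indeed, if $g_1'(0),\dots,g_m'(0)$ are distinct, then for $i\ne j$ one has $g_i(t)-g_j(t)=(g_i'(0)-g_j'(0))\,t+o(t)\ne 0$ for all sufficiently small $t\ne0$, so there is $\delta\in(0,T)$ with $\tilde\varepsilon_{t,\eta}\in\mathcal E$ and $g_1(t),\dots,g_m(t)$ pairwise distinct for every $t\in(0,\delta)$. Since the cluster $\{\lambda_n,\dots,\lambda_{n+m-1}\}$ stays separated from the remaining eigenvalues for small $t$ by the continuity of Theorem \ref{thm:loclip}, each of $\lambda_n[\tilde\varepsilon_{t,\eta}],\dots,\lambda_{n+m-1}[\tilde\varepsilon_{t,\eta}]$ is then simple, which is the assertion. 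Note that the hypothesis $\tilde\lambda\ne0$ is exactly what guarantees that the scalar factor $-\tilde\lambda$ does not collapse the spectrum of $M_\eta$, so that it is enough to make the Gram‑type matrix $\big(\int_\Omega\eta\,\tilde E^{(i)}\cdot\tilde E^{(j)}\,dx\big)_{ij}$ have distinct eigenvalues.

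It remains to find such an $\eta$. I would prove the stronger statement that the linear moment map
\[
\Phi:\ \eta\ \longmapsto\ \Big(\int_\Omega \eta\,\tilde E^{(i)}\cdot\tilde E^{(j)}\,dx\Big)_{i,j=1,\dots,m}
\]
from $W^{1,\infty}(\Omega)\cap\mathrm{Sym}_3(\Omega)$ to the space of real symmetric $m\times m$ matrices is \emph{surjective}; one then simply picks $\eta$ (rescaled into the unit ball) with $\Phi(\eta)=\mathrm{diag}(1,2,\dots,m)$. By self‑duality of $\mathrm{Sym}(m)$ under the Frobenius pairing, surjectivity is equivalent to the implication: if $B=(b_{ij})$ is symmetric and $\sum_{i,j}b_{ij}\int_\Omega\eta\,\tilde E^{(i)}\cdot\tilde E^{(j)}\,dx=0$ for every admissible $\eta$, then $B=0$. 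Letting $\eta$ run over all symmetric matrix fields (and testing against the field itself) forces the pointwise identity $\sum_{i,j}b_{ij}\,\tilde E^{(i)}(x)\otimes\tilde E^{(j)}(x)=0$ for a.e.\ $x\in\Omega$; diagonalizing $B=O^{\top}\mathrm{diag}(b_1,\dots,b_m)\,O$ and passing to the orthogonal change of basis $\tilde F^{(k)}:=\sum_i O_{ki}\tilde E^{(i)}$ (still orthonormal in $L^2_{\tilde\varepsilon}(\Omega)$), the identity becomes
\[
\sum_{k=1}^m b_k\,\tilde F^{(k)}(x)\otimes \tilde F^{(k)}(x)=0\qquad\text{for a.e.\ }x\in\Omega.
\]

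The main obstacle is to deduce $b_1=\dots=b_m=0$ from this relation. Linear independence of the eigenvectors in $L^2$ is \emph{not} enough: one can exhibit $L^2$‑independent vector fields (e.g.\ $(\phi_i,0,0)$ with $\phi_3=\sqrt{1+x^2}$, $\phi_1=1$, $\phi_2=x$) whose rank‑one squares $\tilde F^{(k)}\otimes\tilde F^{(k)}$ are linearly dependent. Hence the argument must exploit the Maxwell structure: the $\tilde F^{(k)}$ are $H^1$ solutions of $\operatorname{curl}\operatorname{curl}\tilde F^{(k)}=\tilde\lambda\,\tilde\varepsilon\,\tilde F^{(k)}$ with $\operatorname{div}(\tilde\varepsilon\tilde F^{(k)})=0$, and I would invoke a unique continuation property of such eigenfields to exclude the pointwise degeneracy encoded by the displayed identity (equivalently, $\sum_k b_k(\tilde F^{(k)}\cdot\xi)^2\equiv0$ for every fixed $\xi\in\mathbb R^3$). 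This unique continuation step, under the sole assumption $\tilde\varepsilon\in W^{1,\infty}$, is where the real work lies. Once it is established, the surjectivity of $\Phi$ and the remaining steps are routine; a weaker non‑degeneracy of $\Phi$ together with an induction on the multiplicity $m$ would also close the argument, but the core difficulty—ruling out the pointwise relation above—remains the same.
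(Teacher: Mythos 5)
Your use of Theorem \ref{thm:RN} to reduce the problem to a statement about the matrix $M_\eta$, and the observation that distinct derivatives $g_i'(0)$ plus the separation of the cluster (Theorem \ref{thm:loclip}) yield simplicity for small $t>0$, are both correct and are also the starting point of the paper's proof. The genuine gap is in the final step, and you name it yourself: your route requires that no single nonzero symmetric $B$ satisfy $\sum_{i,j}b_{ij}\int_\Omega \eta\,\tilde E^{(i)}\cdot\tilde E^{(j)}\,dx=0$ for all $\eta$, i.e.\ you must rule out the pointwise relation $\sum_k b_k\,\tilde F^{(k)}\otimes\tilde F^{(k)}=0$ for \emph{each individual} traceless $B$, and (as your own counterexample shows) this cannot be done by linear independence alone. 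You defer it to an unproven unique continuation property for Maxwell eigenfields with merely $W^{1,\infty}$ permittivity; this is a nontrivial open step (and even granting unique continuation, it is not routine to convert the rank-one identity into the vanishing of a solution on an open set). A proof whose central difficulty is delegated to an unestablished PDE result is incomplete.

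Moreover, your closing remark that ``a weaker non-degeneracy of $\Phi$ together with an induction on the multiplicity $m$ \ldots\ but the core difficulty remains the same'' is exactly where you miss the way out, and it is the route the paper takes. The paper does \emph{not} prove surjectivity of $\Phi$, nor that some $M_\eta$ has $m$ distinct eigenvalues. It only shows, per step, that the cluster splits into at least two distinct values, and then iterates finitely many times. Arguing by contradiction, if no $\eta$ and small $t$ produce any splitting, then $M_\eta$ is a scalar matrix for \emph{every} admissible $\eta$. Dually, this means the orthogonality relation holds simultaneously for \emph{all} traceless symmetric $B$ — in particular for $B=e_{ij}+e_{ji}$ ($i\neq j$) and $B=e_{ii}-e_{jj}$. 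Testing with the diagonal perturbations $\eta=\|\xi\|_{W^{1,\infty}(\Omega)}^{-1}\xi\,e_{hh}$, $\xi\in C^1_c(\Omega)$, and using the fundamental lemma of the calculus of variations gives, for each component $h=1,2,3$ and a.e.\ in $\Omega$,
\begin{equation*}
E^{(i)}_h E^{(j)}_h=0 \quad (i\neq j), \qquad \bigl(E^{(i)}_h\bigr)^2=\bigl(E^{(j)}_h\bigr)^2 .
\end{equation*}
These two relations together force every $E^{(i)}_h$ to vanish a.e.\ (at a.e.\ point one factor of the product vanishes, and then equality of squares kills the other), contradicting that eigenfunctions are nonzero. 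So the quantifier structure of the contradiction hypothesis — the relation holding for all traceless $B$ at once, rather than for one unknown $B$ — is precisely what makes the argument elementary and avoids any unique continuation. Your diagnosis of the obstacle was accurate, but the obstacle is an artifact of insisting on full splitting in one shot.
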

\begin{proof}
We will only prove that there exist $\eta \in  W^{1,\infty} \left(\Omega\right) \cap  \mathrm{Sym}_3 (\Omega)$ with $\norm{\eta}_{W^{1,\infty}(\Omega)} \leq 1$ and $t>0$  as small as desired such that the eigenvalues $\lambda_n[\tilde \varepsilon_{t,\eta}] , \ldots , \lambda_{n+m-1}[\tilde  \varepsilon_{t,\eta}]$  are not all equal. Then, repeating the same argument for the eigenvalues that have still a multiplicity strictly greater than one, in a finite number of steps we are done.
Note that by the  continuity of the eigenvalues with respect to permittivity variations and by choosing $t$ small enough we can avoid that the eigenvalues splitting from a multiple eigenvalue could overlap or switch position with other eigenvalues.

Hence, suppose by contradiction that there exists $T >0$ such that for all $\eta \in W^{1,\infty} \left(\Omega\right) \cap  \mathrm{Sym}_3 (\Omega)$ with $\norm{\eta}_{W^{1,\infty}(\Omega)} \leq 1$ and for all 
$t \in \mathopen]0,T[$,   all the eigenvalues  
$\lambda_n[\tilde \varepsilon_{t,\eta}] , \ldots , \lambda_{n+m-1}[\tilde  \varepsilon_{t,\eta}]$  coincide. As a consequence, all the right derivatives  at $t = 0$ 
of the branches coincide. Then, if we fix $\eta$ and use \Cref{thm:RN}, we get that all the eigenvalues of the matrix
\begin{equation} \label{matrix:rellichnagy:dfnM}
M:=\left( - \tilde \lambda \int_\Omega \eta \tilde{E}^{(i)} \cdot \tilde{E}^{(j)} \, dx \right)_{i,j=1,\ldots,m}
\end{equation}
coincide. Since the above matrix is a real symmetric matrix with only one eigenvalue, it is a scalar matrix. In other words, there exists $\mu[\eta] \in \mathbb{R}$ such that 
\begin{equation}\label{eq:M}
M = \mu[\eta] \, I_m,
\end{equation}
where $I_m$ denotes the $(m \times m)$-identity matrix.
For $h=1,2,3$ we set
\[
\eta_h := \norm{\xi}_{W^{1,\infty}(\Omega)}^{-1} \xi \, e_{hh}
\]
with $0\neq \xi \in C_c^1(\Omega)$ arbitrary and $e_{hh}$ the $(3 \times 3)$-matrix with $(h,h)$-entry equal to $1$ and zeros elsewhere. Since $\tilde \lambda \neq 0$, by \eqref{matrix:rellichnagy:dfnM}, \eqref{eq:M} and using the above defined $\eta_h$ we can recover that for all $\xi \in C_c^1(\Omega)$
\[
\int_{\Omega}\xi \, E^{(i)}_h E^{(j)}_h \,dx=0 \qquad \forall i,j \in \{1,\ldots,m\}, i \neq j, \quad \forall h=1,2,3,
\]
and 
\[
\int_{\Omega}\xi \left( (E^{(i)}_h)^2- (E^{(j)}_h)^2 \right)\,dx= 0 \qquad \forall i,j \in \{1,\ldots,m\}, \quad \forall h=1,2,3.
\]
By the fundamental lemma of calculus of variations we get that a.e. in $\Omega$
\[
E^{(i)}_h E^{(j)}_h=0    \qquad  \forall i,j \in \{1,\ldots,m\},\, i \neq j, \quad \forall h=1,2,3,
\]
and 
\[
(E^{(i)}_h)^2- (E^{(j)}_h)^2 =0  \qquad  \forall i,j \in \{1,\ldots,m\}, \quad \forall h=1,2,3.
\]
The above relations clearly implies that $E_i=0$ for all $ i \in \{1,\ldots,m\}$, which is a contradiction since they are not identically zero, being eigenfunctions. 
\end{proof}

\begin{remark}
The constraint $\norm{\eta}_{ W^{1,\infty}(\Omega)}\leq 1$ in the above proposition can be replaced by $\norm{\eta}_{ W^{1,\infty}(\Omega)}\leq \delta$ for any $\delta >0$.
\end{remark}

\begin{remark}
The argument we have used to split a multiple eigenvalue into several eigenvalues of lower multiplicity uses that $\eta$ is a general symmetric matrix and not a scalar matrix. However, noticing in which way $\eta_h$ is defined, one can easily realize that such an argument still works if $\eta$ varies in the class of diagonal matrices. Instead, in the case that we restrict ourselves to the case of scalar matrices, what we can recover by arguing in the same way is that 
\[
E^{(i)} \cdot E^{(j)}=0   \qquad  \forall i,j \in \{1,\ldots,m\}, i \neq j.
\]
and 
\[
|E^{(i)}|^2- |E^{(j)}|^2 = 0 \qquad  \forall i,j \in \{1,\ldots,m\}.
\]
This does not immediately lead to a contradiction.  Thus, it would be interesting to investigate whether it is still possible 
to split the whole spectrum when the permittivies are scalar.
\end{remark}

We are now ready to show that the whole positive Maxwell spectrum is generically simple with respect to the permittivity. We note that our proof is inspired by the methods of Albert \cite{Al75} 
\begin{theorem}\label{thm:gensim}
Let $\Omega$ be as in \eqref{Omega_def}. Let $\tilde{\varepsilon} \in \mathcal{E}$ and let $\delta>0$ be small enough such that 
\[
\tilde \varepsilon + \eta  \in \mathcal{E}
\] 
for all $\eta \in W^{1,\infty} \left(\Omega\right) \cap  \mathrm{Sym}_3 (\Omega)$ with $\|\eta \|_{W^{1,\infty}(\Omega)} \leq \delta$. 
Let
\[
B_0 := \left\{ \eta \in W^{1,\infty} \left(\Omega\right) \cap  \mathrm{Sym}_3 (\Omega): \|\eta \|_{W^{1,\infty}(\Omega)} \leq \delta \right\}
\]
and
\[
B_n :=  \left\{\eta \in B_0: \mbox{ the first $n$ positive Maxwell eigenvalues with $\varepsilon =\tilde \varepsilon +\eta$ are simple}\right\}
\]
for $n \in \mathbb{N}$.
Then
\[
B:= \bigcap_{n \in \mathbb{N}}B_n = \left\{ \eta \in B_0: \mbox{ all the positive Maxwell eigenvalues  with $\varepsilon =\tilde \varepsilon +\eta$ are simple}\right\}
\]
is dense in $B_0$. 
\end{theorem}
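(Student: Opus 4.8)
The plan is a Baire category argument. I would first observe that $B_0$, being a closed ball in the Banach space $W^{1,\infty}(\Omega) \cap \mathrm{Sym}_3(\Omega)$, is a complete metric space, so the Baire category theorem applies (note that non-separability of $W^{1,\infty}$ is irrelevant here). It therefore suffices to show that each $B_n$ is open and dense in $B_0$; then $B = \bigcap_{n\in\mathbb{N}} B_n$, a countable intersection of open dense sets, is dense (indeed residual) in $B_0$.

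\emph{Openness of $B_n$.} Writing $\varepsilon = \tilde\varepsilon + \eta$, the condition defining $B_n$ is that the first $n$ positive Maxwell eigenvalues be simple, that is $0 < \lambda_1[\varepsilon] < \lambda_2[\varepsilon] < \cdots < \lambda_n[\varepsilon] < \lambda_{n+1}[\varepsilon]$. Each Maxwell eigenvalue $\lambda_j$ is continuous in $\eta$ by Theorem \ref{thm:loclip} (local Lipschitz continuity of the $\sigma_j$ carries over to the Maxwell subfamily). Since this is a finite family of strict inequalities, it is preserved under a sufficiently small $W^{1,\infty}$-perturbation of $\eta$; hence $B_n$ is relatively open in $B_0$.

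\emph{Density of $B_n$.} This is the crux. Given $\eta_0 \in B_0$ and $r>0$, I would first replace $\eta_0$ by a nearby point of the open ball, leaving room to perturb without exiting $B_0$, and set $\varepsilon_0 := \tilde\varepsilon + \eta_0 \in \mathcal{E}$. Among the first $n$ positive Maxwell eigenvalues of $\varepsilon_0$ there are finitely many distinct values; if they are not all simple, choose a cluster $\lambda_k[\varepsilon_0] = \cdots = \lambda_{k+m-1}[\varepsilon_0]$ of multiplicity $m>1$ meeting $\{1,\dots,n\}$. Applying Proposition \ref{first:step:genericity} with base permittivity $\varepsilon_0$ yields some $\eta$ with $\|\eta\|_{W^{1,\infty}(\Omega)} \le 1$ and arbitrarily small $t>0$ for which the eigenvalues emanating from this cluster are no longer all equal. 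Taking $t$ small and invoking continuity, the simplicities already present among the first $n$ eigenvalues are retained and no crossing with neighbouring eigenvalues occurs, so the total multiplicity within the first $n$ strictly decreases while $\|t\eta\|_{W^{1,\infty}(\Omega)}$ stays below any prescribed threshold. Iterating over the finitely many clusters and finitely many excess multiplicities, after finitely many steps I obtain $\eta' \in B_0$ with $\|\eta' - \eta_0\|_{W^{1,\infty}(\Omega)} < r$ whose first $n$ positive Maxwell eigenvalues are all simple, i.e. $\eta' \in B_n$. Hence $B_n$ is dense.

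\emph{Main obstacle.} The delicate part is the density step, and specifically the bookkeeping in the iterative splitting: one must apply Proposition \ref{first:step:genericity} (which itself rests on the Rellich--Nagy description in Theorem \ref{thm:RN}) to break up one multiple eigenvalue while keeping $t$ small enough that no previously separated eigenvalue re-merges and no eigenvalue leaves or enters the window of the first $n$. The openness established above is exactly what makes this precise: each splitting step lands in an open set on which the previously achieved simplicities persist, so the finite induction terminates. A secondary technicality is remaining inside the \emph{closed} ball $B_0$ throughout, which is handled by first retreating slightly into the open ball before perturbing.
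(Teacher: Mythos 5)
Your proposal is correct and takes essentially the same route as the paper: Baire's theorem on the complete metric space $B_0$, openness of each $B_n$ from the local Lipschitz continuity of Theorem \ref{thm:loclip}, and density via the splitting result of Proposition \ref{first:step:genericity}. The only organizational difference is that you prove $B_n$ dense in $B_0$ directly by iterating the splitting over all multiple clusters among the first $n$ eigenvalues, whereas the paper proves $B_{n+1}$ dense in $B_n$ by contradiction, so that a single application of Proposition \ref{first:step:genericity} to the $(n+1)$-th eigenvalue suffices; both versions are sound.
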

\begin{proof}
The proof follows by applying the Baire's lemma in the complete metric space $B_0$. In order to do this, we have to show that
\begin{itemize}
\item[i)] $B_n$ is open in $B_0$ for all $n \in \mathbb{N}$,
\item[ii)] $B_{n+1}$ is dense in $B_n$ for all $n \in \mathbb{N}$.
\end{itemize}
Statement i) follows from the continuity of the eigenvalues with respect to the permittivity parameter (see Theorem \ref{thm:loclip}). Next we prove statement ii) by contradiction. 
Assume that $B_{n+1}$ is not dense in $B_n$ for some $n \in \mathbb{N}$. Then there exists $\eta \in B_n \setminus B_{n+1}$ and 
a neighborhood $U$ of $\eta$ in $B_0$ such that 
\[
U \subseteq B_n \setminus B_{n+1}.
\]
Since $\eta \in B_n\setminus B_{n+1}$ then
\begin{itemize}
\item the first $n$ non-zero Maxwell eigenvalues with $\varepsilon=\tilde \varepsilon +\eta$ are simple,
\item  the $(n+1)$-th non-zero Maxwell eigenvalue with $\varepsilon=\tilde \varepsilon +\eta$ has multiplicity $k$ for some $k \in \mathbb{N}$, $k \geq 2$.
\end{itemize}
Moreover, we note that for all $\rho \in U \subseteq B_n \setminus B_{n+1}$ we have:
\begin{itemize}
\item the first $n$ non-zero Maxwell eigenvalues with $\varepsilon=\tilde \varepsilon +\rho$ are simple,
\item the $(n+1)$-th  non-zero Maxwell eigenvalue with  $\varepsilon=\tilde \varepsilon +\rho$  is not simple.
\end{itemize}
By \Cref{first:step:genericity} there exist $\hat\rho \in W^{1,\infty} \left(\Omega\right) \cap  \mathrm{Sym}_3 (\Omega)$ with $\|\hat\rho\|_{W^{1,\infty}(\Omega) }\leq 1$ and $t>0$ arbitrarily small such that $\eta + t\hat \rho \in U$ and  all the non-zero Maxwell eigenvalues with $\varepsilon=\tilde \varepsilon + \eta + t\hat \rho$ with indices from $(n+1)$ to $(n+k)$ are simple, therefore we deduce that in particular $\eta + t\hat \rho \in B_{n+1}$. This 
is a contradiction since 
$U \subseteq B_n \setminus B_{n+1}$.
\end{proof}

\subsection*{Acknowledgment}

The authors are members of the `Gruppo Nazionale per l'Analisi Matematica, la Probabilit\`a e le loro Applicazioni' (GNAMPA) of the `Istituto Nazionale di Alta Matematica' (INdAM) and acknowledge the support of the Project BIRD191739/19 `Sensitivity analysis of partial differential equations in
the mathematical theory of electromagnetism' of the University of Padova.
The second author was partially supported by `Fondazione Ing. Aldo Gini' during the preparation of this paper. 
The authors are deeply thankful to Prof. Pier Domenico Lamberti  for many valuable comments during the preparation of the paper.

\end{document}